\documentclass[11pt]{amsart}
\usepackage{amssymb,amsmath,amsthm,enumitem,colonequals,mlmodern,tikz-cd,microtype}
\usepackage[cal=euler,bfcal,bb=px,bfbb]{mathalpha}
\usepackage{epsfig}
\usetikzlibrary{positioning}
\usepackage{url}
\usepackage[colorlinks=true,citecolor=blue,urlcolor=blue,linkcolor=blue]{hyperref}
\usepackage{setspace}
\usepackage{fancyhdr}
\usepackage{pdfpages}
\usepackage{color}
\usepackage{verbatim}
\usepackage{fancyhdr}
\usepackage[top=2.5cm, bottom=2.5cm, left=2cm, right=2cm]{geometry}
\usepackage{xcolor}
\colorlet{darkblue}{blue!55!black}
\colorlet{darkcyan}{cyan!50!black}
\colorlet{darkgreen}{green!60!black} 

\PassOptionsToPackage{hyphens}{url}
\usepackage{hyperref}
\hypersetup{
    colorlinks=true,
    linkcolor=darkblue,
    urlcolor=darkcyan,
    citecolor=darkgreen,
}

\def\eqref#1{\textcolor{darkblue}{(\ref{#1})}}

\usepackage[nameinlink]{cleveref} 
\Crefformat{section}{#2\S#1#3}
\Crefmultiformat{section}{#2\S\S#1#3}{ and~#2#1#3}{, #2#1#3}{, and~#2#1#3}

\usepackage[pagewise]{lineno}
\let\oldequation\equation
\let\oldendequation\endequation
\renewenvironment{equation}{\linenomathNonumbers\oldequation}{\oldendequation\endlinenomath}
\expandafter\let\expandafter\oldequationstar\csname equation*\endcsname
\expandafter\let\expandafter\oldendequationstar\csname endequation*\endcsname
\renewenvironment{equation*}{\linenomathNonumbers\oldequationstar}{\oldendequationstar\endlinenomath}
\let\oldalign\align
\let\oldendalign\endalign
\renewenvironment{align}{\linenomathNonumbers\oldalign}{\oldendalign\endlinenomath}
\expandafter\let\expandafter\oldalignstar\csname align*\endcsname
\expandafter\let\expandafter\oldendalignstar\csname endalign*\endcsname
\renewenvironment{align*}{\linenomathNonumbers\oldalignstar}{\oldendalignstar\endlinenomath}

\newcommand {\Hom} {\mathsf{Hom}}
\newcommand {\End} {\mathcal E\mathsf{nd}}

\newcommand  {\rank}     {\textsf{rank}}

\newcommand  {\id}   {\mathrm{id}}

\DeclareMathOperator{\Spec}{Spec}

\DeclareMathOperator{\tr}{tr}

\theoremstyle{plain}
\newtheorem{theorem}{Theorem}[section]
\newtheorem{lemma}[theorem]{Lemma}

\newtheorem{proposition}[theorem]{Proposition}

\theoremstyle{definition}
\newtheorem{definition}[theorem]{Definition}

\newtheorem{remark}[theorem]{Remark}

\AddToHook{env/conjecture/begin}{\crefalias{theorem}{conjecture}}
\AddToHook{env/lemma/begin}{\crefalias{theorem}{lemma}}
\AddToHook{env/corollary/begin}{\crefalias{theorem}{corollary}}
\AddToHook{env/proposition/begin}{\crefalias{theorem}{proposition}}
\AddToHook{env/definition/begin}{\crefalias{theorem}{definition}}
\AddToHook{env/remark/begin}{\crefalias{theorem}{remark}}
\AddToHook{env/example/begin}{\crefalias{theorem}{example}}
\AddToHook{env/hypothesis/begin}{\crefalias{theorem}{hypothesis}}
\AddToHook{env/notation/begin}{\crefalias{theorem}{notation}}

\numberwithin{equation}{section}
\numberwithin{theorem}{section}

\title{Weil's Theorem for Logarithmic Connections on Irreducible Nodal Curves}
\author{Sourav Das}
\email{sdas6565@gmail.com}
\begin{document}
\maketitle
\begin{abstract}
We establish an analogue of André Weil's classical theorem for irreducible nodal curves. Let \(X_0\) be an irreducible projective nodal curve. We prove that an indecomposable vector bundle or torsion-free coherent sheaf \(E\) on \(X_0\) admits a holomorphic logarithmic connection \(\nabla\colon E\to E\otimes\omega_{X_0}\) with respect to the dualizing sheaf if and only if \(\deg E=0\). Moreover, when \(\deg E=0\), such a connection can be chosen so that the induced logarithmic connection on the normalization has scalar residues \(\lambda\cdot I\) at one preimage of the node and \(-\lambda\cdot I\) at the other, for some \(\lambda\in\mathbb{C}\). Explicit one-parameter families of flat connections are constructed on the irreducible rational nodal cubic curve as an illustration.
\end{abstract}

\section{Introduction}

A classical theorem of Andr\'e Weil states that an indecomposable holomorphic vector bundle on a smooth projective curve admits a holomorphic logarithmic connection if and only if its degree is zero \cite{Weil}. Atiyah later reinterpreted this result cohomologically by introducing the \emph{Atiyah class}, which encodes the primary obstruction to the existence of such a connection \cite{Ati57}. Together with the Narasimhan--Seshadri theorem \cite{NS}, which establishes a bijection between stable vector bundles of degree zero and unitary representations of the fundamental group, Weil's theorem lies at the heart of non-abelian Hodge theory. This correspondence---relating holomorphic logarithmic connections, Higgs bundles, and representations of $\pi_1$---has profoundly shaped modern algebraic geometry and differential geometry.

In the singular setting, the picture is far more subtle. For \emph{irreducible nodal curves}, the appropriate objects are torsion-free coherent sheaves (equivalently, generalized parabolic bundles on the normalization together with gluing data at the nodes). Usha Bhosle developed a comprehensive theory of generalized parabolic bundles and proved a partial analogue of the Narasimhan--Seshadri correspondence: a degree-zero generalized parabolic bundle arises from a representation of the fundamental group of the nodal curve if and only if it is a direct sum of indecomposable degree-zero objects \cite{Bhosle92,Bhosle95}. However, unlike the smooth case, there exist stable torsion-free sheaves of degree zero on nodal curves that do \emph{not} correspond to any representation of $\pi_1(X_0)$. Thus a full non-abelian Hodge correspondence remains unsolved in the singular curve setting.

The goal of this note is to establish a \emph{direct Weil-type criterion on the nodal curve itself}, providing a natural bridge between the theory of holomorphic logarithmic connections and the geometry of singular curves. Let $X_0$ be an irreducible projective nodal curve of arithmetic genus $g\geq 1$, obtained by identifying two points $x^+$ and $x^-$ on its normalization $\pi\colon \widetilde{X_0}\to X_0$. Let $\omega_{X_0}$ denote the dualizing sheaf of $X_0$.

Our main results are the following analogues of Weil's theorem:

\begin{theorem}[Locally free case]
Let $E$ be an indecomposable locally free $\mathcal{O}_{X_0}$-module of rank $n$ on $X_0$. Then $E$ admits a holomorphic logarithmic connection $\nabla\colon E\to E\otimes\omega_{X_0}$ if and only if $\deg E=0$.
\end{theorem}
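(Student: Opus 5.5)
We would prove both directions with the Atiyah-class formalism, carried out on the nodal curve $X_0$ with $\omega_{X_0}$ in the role of the canonical bundle. For a locally free $E$, a holomorphic logarithmic connection $\nabla\colon E\to E\otimes\omega_{X_0}$ is a $\mathbb{C}$-linear map obeying the Leibniz rule $\nabla(fs)=f\nabla s+s\otimes df$, where $d\colon\mathcal{O}_{X_0}\to\omega_{X_0}$ is the composite $\mathcal{O}_{X_0}\to\Omega^1_{X_0}\to\omega_{X_0}$. On an open cover $\{U_i\}$ trivializing $E$ with transition functions $g_{ij}$, the local operators $d$ differ on overlaps by $g_{ij}^{-1}dg_{ij}$. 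These differences give a Čech class $at(E)\in H^1(X_0,\End(E)\otimes\omega_{X_0})$, and $E$ admits such a connection if and only if $at(E)=0$. Since $X_0$ is Gorenstein and $\End(E)$ is locally free, Serre duality gives $H^1(X_0,\End(E)\otimes\omega_{X_0})\cong H^0(X_0,\End(E))^\vee$. Hence $at(E)=0$ if and only if the pairing $\phi\mapsto\langle at(E),\phi\rangle$ vanishes for every $\phi\in H^0(\End(E))$.

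\textbf{Step 1 (the trace formula).} We would show that $\langle at(E),\phi\rangle$ equals, up to a universal nonzero constant $c$, the value $\tr$ of $\phi\cdot at(E)$ under the trace map $H^1(\omega_{X_0})\to\mathbb{C}$. For $\phi=\id$ this gives $c\deg E$: the class $\tr\, at(E)$ is $at(\det E)$, represented by $d\log\det g_{ij}$. Pulled back to $\widetilde{X_0}$, this is the usual Chern class of $\pi^*\det E$, whose degree equals $\deg E$ because $E$ is locally free. The residue theorem, applied to the trace map of $\omega_{X_0}$ (sum of residues at $x^+$ and $x^-$ of the pulled-back forms), identifies the two. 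Necessity then follows at once: a connection forces $at(E)=0$, hence $\deg E=0$. This holds with no indecomposability assumption.

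\textbf{Step 2 (sufficiency).} Assume $E$ is indecomposable of degree $0$. The algebra $H^0(X_0,\End(E))$ is finite dimensional, and since $E$ is indecomposable it is local. Every endomorphism can therefore be written $\phi=\lambda\,\id+N$ with $N$ nilpotent, by the usual Fitting-lemma argument: the eigenspace decomposition of $\phi$ would otherwise split $E$. The pairing is linear in $\phi$, and $\langle at(E),\id\rangle=c\deg E=0$, so it suffices to show $\langle at(E),N\rangle=0$ for nilpotent $N$. We would show that $\tr(N\cdot at(E))$ is the trace of a nilpotent-valued cocycle, following Atiyah's argument. Take the filtration $0\subset\ker N\subset\ker N^2\subset\cdots\subset E$ by subsheaves. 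These are saturated but possibly not locally free at the node, so we would instead use the images $N^kE$ and pass to saturations. Locally we would choose frames adapted to this filtration, so that the $g_{ij}$ are block upper triangular and $N$ is strictly block upper triangular. Then $N\,g_{ij}^{-1}dg_{ij}$ is strictly block upper triangular and has zero trace, so $\tr(N\cdot at(E))=0$.

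\textbf{Main obstacle.} The hard part is Step 2 at the node. The filtration pieces must be locally free near the node so that adapted frames exist in a neighborhood of it; on a nodal curve, saturated subsheaves of a locally free sheaf need not be locally free. If that fails, we would pull everything back to $\widetilde{X_0}$, where all subsheaves in the filtration are bundles. There we would check that $\tr(\pi^*N\cdot at)$ equals a sum of residue contributions at $x^\pm$ that cancel, because the gluing isomorphism $E_{x^+}\cong E_{x^-}$ intertwines $N$ at the two points. We would also pin down the constant $c$ in Step 1 by a local computation of residues of $d\log$ at the branches of the node.
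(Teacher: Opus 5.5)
Your outline is the paper's proof: the Atiyah class in $H^1(X_0,\End E\otimes\omega_{X_0})$, Serre duality against $H^0(X_0,\End E)$, the Fitting decomposition $\phi=\lambda\,\id+N$, the trace of the Atiyah class giving the degree, and Atiyah's flag argument for $N$. You have also correctly found the step the paper does not justify. The paper simply asserts that each $\ker N^j$ is a subbundle ``because $N$ is a morphism of vector bundles''.

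Your first remedy does not close this gap. Saturating the images $N^kE$ again produces saturated subsheaves, and these are exactly the subsheaves that can fail to be locally free at the node. In fact an adapted flag of subbundles need not exist at all. Take the nodal cubic, with $\widetilde{X_0}=\mathbb{P}^1$, $x^+=0$ and $x^-=\infty$. Let $\widetilde E=\mathcal{O}(1)\oplus\mathcal{O}(-1)$, glued by an $A$ that does not carry the line $\mathcal{O}(1)_0$ to $\mathcal{O}(1)_\infty$. Let $N$ be the map $\mathcal{O}(-1)\to\mathcal{O}(1)$ given by a section $s\in H^0(\mathcal{O}(2))$ vanishing at $0$ and $\infty$.
\begin{itemize}
\item Since $N$ vanishes at $x^\pm$, it descends to $E$.
\item One checks that $H^0(X_0,\End E)=\{a\,\id+cN\}$, so $E$ is indecomposable of degree $0$.
\item However, $\ker N=\pi_*\mathcal{O}_{\mathbb{P}^1}(-1)$ is not locally free. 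Any rank-one subbundle killed by $N$ would be saturated, hence equal to $\ker N$. So no flag of subbundles with $N(E_i)\subset E_{i-1}$ exists, and the saturation of $NE$ is again $\ker N$.
\end{itemize}
The theorem still holds for this $E$; it is the proof that breaks.

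Passing to $\widetilde{X_0}$ is the right move. However, the residue cancellation you describe is not needed, and as written it is not yet an argument. The missing observation is that $d\colon\mathcal{O}_{X_0}\to\omega_{X_0}$ takes values in $\pi_*\Omega^1_{\widetilde{X_0}}\subset\omega_{X_0}$, because pull-backs of regular functions have holomorphic differentials.

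Hence $g_{ij}^{-1}dg_{ij}$ is a cocycle in $\End E\otimes\pi_*\Omega^1_{\widetilde{X_0}}\cong\pi_*(\End\widetilde E\otimes\Omega^1_{\widetilde{X_0}})$. Since $\pi$ is finite, $at(E)$ is the image of the \emph{holomorphic} Atiyah class $at(\widetilde E)\in H^1(\widetilde{X_0},\End\widetilde E\otimes\Omega^1_{\widetilde{X_0}})$ of $\widetilde E=\pi^*E$.

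From $0\to\pi_*\Omega^1_{\widetilde{X_0}}\to\omega_{X_0}\to\mathcal{O}_x\to 0$, the map $H^1(\widetilde{X_0},\Omega^1_{\widetilde{X_0}})\to H^1(X_0,\omega_{X_0})$ is a surjection between one-dimensional spaces, hence an isomorphism. Because the sheaf map is $\End E$-linear, it intertwines $\alpha\mapsto\tr(\pi^*\phi\cdot\alpha)$ with $\beta\mapsto\tr(\phi\cdot\beta)$. Therefore $\langle\phi,at(E)\rangle=0$ if and only if $\tr(\pi^*\phi\cdot at(\widetilde E))=0$ in $H^1(\widetilde{X_0},\Omega^1_{\widetilde{X_0}})$.

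For $\phi=N$, the kernel filtration of $\pi^*N$ on the smooth curve $\widetilde{X_0}$ consists of subbundles. Atiyah's triangular-frame computation then applies on any trivializing cover. No residue terms arise, and the gluing $A$ plays no role. The same identity with $\phi=\id$ gives your Step 1 as $c_1(\det\pi^*E)$, with no local residue computation at the branches.
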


\begin{theorem}[Scalar residues -- locally free case]
Let $E$ be an indecomposable locally free $\mathcal{O}_{X_0}$-module of rank $n$ and degree zero. Then there exists a holomorphic logarithmic connection $\nabla$ on $E$ such that the induced logarithmic connection on the normalization $\widetilde{E}=\pi^*E$ has scalar residues $\lambda\cdot I$ at $x^+$ and $-\lambda\cdot I$ at $x^-$, for some $\lambda\in\mathbb{C}$.
\end{theorem}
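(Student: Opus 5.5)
Reading $\nabla\colon E\to E\otimes\omega_{X_0}$ through the normalization, a connection on $E$ is the same as a logarithmic connection $\widetilde\nabla$ on $\widetilde E=\pi^*E$ with poles at most at $x^+,x^-$, subject to a compatibility condition at the node. A section of $\omega_{X_0}$ pulls back to a form on $\widetilde{X_0}$ with at most simple poles at $x^\pm$ whose residues sum to zero. Hence the condition is that, under the gluing isomorphism $\phi\colon \widetilde E_{x^+}\xrightarrow{\sim}\widetilde E_{x^-}$ defining $E$,
\[
\phi\circ \mathrm{Res}_{x^+}\widetilde\nabla=-\,\mathrm{Res}_{x^-}\widetilde\nabla\circ\phi .
\]
Scalar residues $\lambda I$ and $-\lambda I$ satisfy this automatically for every $\phi$. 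The plan is therefore to build a logarithmic connection on $\widetilde E$ with exactly these residues, and to use the first theorem (the locally free Weil criterion) only through the Atiyah-class mechanism behind it.

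\textbf{Step 1: reduce to an Atiyah-class problem on $\widetilde{X_0}$.} By the standard Atiyah--Mok theory on the smooth curve $\widetilde{X_0}$ with divisor $D=x^++x^-$, a logarithmic connection with prescribed residues $\lambda I$ at $x^+$ and $-\lambda I$ at $x^-$ exists if and only if a twisted Atiyah class vanishes. Concretely, $\widetilde E$ should carry a holomorphic connection after twisting by the rank-one logarithmic connection on $\mathcal O$ with these residues. Since $\mathcal O(\lambda x^+-\lambda x^-)$ is not an honest line bundle for general $\lambda$, I would phrase this via Serre duality. The obstruction is the functional on $H^0(\widetilde{X_0},\End\widetilde E)$ given by
\[
s\mapsto \langle \mathrm{at}(\widetilde E),s\rangle+\lambda\bigl(\tr s(x^+)-\tr s(x^-)\bigr),
\]
and this functional must vanish. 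This is Weil's argument with residue corrections, as in Mihai and Biswas.

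\textbf{Step 2: reduce to endomorphisms that descend.} I would not seek a connection compatible with an arbitrary endomorphism of $\widetilde E$. Instead I would work directly on $X_0$ and compute the obstruction there. The Atiyah-type obstruction for $E$ on $X_0$ with values in $\omega_{X_0}$ pairs against $H^0(X_0,\End E)$, by Serre duality on the Gorenstein curve $X_0$. For $s\in H^0(X_0,\End E)$, we have $\pi^*s(x^-)\circ\phi=\phi\circ\pi^*s(x^+)$, so the $\lambda$-correction vanishes: the traces at the two preimages agree. The obstruction for the scalar-residue connection is then just $\langle\mathrm{at}(E),s\rangle$, the same functional as in the first theorem. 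That functional vanishes on all $s$ exactly because $E$ is indecomposable of degree $0$. Indecomposability makes $H^0(X_0,\End E)$ local, so $s=cI+N$ with $N$ nilpotent. Then $\langle\mathrm{at},I\rangle$ is proportional to $\deg E=0$, and the nilpotent part pairs to zero by Atiyah's filtration argument.

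\textbf{Step 3: choose $\lambda$ and assemble the connection.} I expect the main obstacle here to be the passage from "obstruction zero on $X_0$" to an actual connection on $\widetilde E$ with \emph{scalar} residues. A general $\nabla$ supplied by the first theorem has residues $R^+$ and $-\phi^{-1}R^+\phi$ with $R^+$ arbitrary. To fix this, I would construct the connection directly. Cover $X_0$ by affines, take local connections with scalar residue $\lambda I$ near the node (a single local choice, using a local frame of $E$ at the node and $\lambda\,dz/z$-type forms in $\omega_{X_0}$), and form the difference Čech cocycle in $H^1(X_0,\End E\otimes\omega_{X_0})$. Step 2 shows this class is dual to zero, so it can be corrected by global sections of $\End E\otimes\omega_{X_0}$ supported away from the node. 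Corrections that are regular at the node do not change the residue. What needs checking is that the correction cochains can be chosen holomorphic at $x^\pm$. I would address this by choosing the open cover so that the node lies in exactly one open set, whose local connection already has the right residue. The value of $\lambda$ is then free, and any $\lambda\in\mathbb C$ works, including $\lambda=0$.
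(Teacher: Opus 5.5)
There is a genuine gap, and it cannot be repaired, because the statement is false as written. Your Step 2 is exactly where the obstruction gets discarded. Step 1 is correct and already names the right test space: all of $H^0(\widetilde{X_0},\End\widetilde E)$. For such $s$ the function $\operatorname{tr}s$ is constant on the projective curve $\widetilde{X_0}$, so your $\lambda$-term vanishes identically. Your own twisting remark in Step 1 makes this explicit. Take $\eta\in H^0(X_0,\omega_{X_0})$ with residues $1,-1$ at $x^+,x^-$. Then $d+\lambda\eta$ is an honest rank-one logarithmic connection on $\mathcal{O}$ with residues $\lambda,-\lambda$. Hence the desired connection exists iff $\pi^*E$ carries a \emph{holomorphic} connection: subtract $\lambda\eta\otimes\mathrm{id}$ one way, and conversely a holomorphic connection has zero residues, so it descends. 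By Weil on $\widetilde{X_0}$, this holds iff every indecomposable summand of $\pi^*E$ has degree $0$.

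In Step 2 you test only against the subspace $H^0(X_0,\End E)$ of endomorphisms commuting with the gluing, and Step 3 does not compensate. Trivializing your cocycle in $\End E\otimes\omega_{X_0}$ yields a cochain $\beta_0$ on the open set containing the node. That $\beta_0$ may have any residue $(R,-ARA^{-1})$ there, and putting the node in a single open set does not constrain it. The actual obstruction is the class of the same cocycle in $H^1$ of the residue-free subsheaf $\pi_*(\End\widetilde E\otimes\Omega^1_{\widetilde{X_0}})$, i.e.\ in $H^1(\widetilde{X_0},\End\widetilde E\otimes\Omega^1_{\widetilde{X_0}})$. The surjection of this group onto $H^1(X_0,\End E\otimes\omega_{X_0})$ is Serre dual to the inclusion $H^0(X_0,\End E)\hookrightarrow H^0(\widetilde{X_0},\End\widetilde E)$. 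Its kernel is precisely what you lose.

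Here is a concrete counterexample. Take the nodal cubic, with $\widetilde{X_0}=\mathbb{P}^1$, $x^+=0$, $x^-=\infty$ and $\widetilde E=\mathcal{O}(1)\oplus\mathcal{O}(-1)$. Glue by any $A$ with $A(\mathcal{O}(1)_0)\neq\mathcal{O}(1)_\infty$.
\begin{itemize}
\item Endomorphisms of $\widetilde E$ have the form $s=\bigl(\begin{smallmatrix} a & f\\ 0 & b\end{smallmatrix}\bigr)$ with $f\in H^0(\mathcal{O}(2))$.
\item Descent means $s(\infty)=As(0)A^{-1}$. This makes $s(\infty)$ act by $a$ on the two distinct lines $\mathcal{O}(1)_\infty$ and $A(\mathcal{O}(1)_0)$, forcing $a=b$ and $f(0)=f(\infty)=0$.
\item Thus $H^0(X_0,\End E)=\{aI+cN\}$ with $N^2=0$ is local, and $E$ is indecomposable of degree $0$.
\item Now let $\widetilde\nabla$ have residues $\lambda I,-\lambda I$. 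Composing it with the inclusion and projection of the summand $\mathcal{O}(1)$ gives a logarithmic connection on $\mathcal{O}(1)$ with residues $\lambda,-\lambda$. This contradicts the residue theorem, since $\deg\mathcal{O}(1)=1$.
\end{itemize}
The non-descending idempotent onto $\mathcal{O}(1)$ is exactly the test vector your Step 2 throws away. Your claim that $\lambda=0$ works would even put a holomorphic connection on $\mathcal{O}(1)\oplus\mathcal{O}(-1)$.

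The paper's proof has the same defect. Its duality computation correctly produces a kernel inside $H^0(\widetilde{X_0},\pi^*\End E)$. But \eqref{eqnew} then treats its elements as endomorphisms of $E$ and applies the Fitting lemma to $\End E$, which says nothing about the possibly decomposable $\pi^*E$. An extra hypothesis is needed, for example that every indecomposable summand of $\pi^*E$ has degree $0$. Under that hypothesis your Step 1 alone gives the result, for every $\lambda$.
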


We prove analogous statements for indecomposable torsion-free coherent sheaves:

\begin{theorem}[Torsion-free case]
Let $\mathcal{F}$ be an indecomposable torsion-free coherent sheaf of rank $n$ on $X_0$. Then $\mathcal{F}$ admits a holomorphic logarithmic connection $\nabla\colon \mathcal{F}\to \mathcal{F}\otimes\omega_{X_0}$ if and only if $\deg\mathcal{F}=0$.
\end{theorem}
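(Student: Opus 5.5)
We reduce to the normalization and to the locally free case already stated, using the standard local structure of torsion-free sheaves at a node. Near the node $x_0$, a torsion-free sheaf of rank $n$ is analytically isomorphic to $\mathcal{O}_{X_0}^{\,n-k}\oplus \mathfrak{m}_{x_0}^{\,k}$ for some $0\le k\le n$, and $\mathfrak{m}_{x_0}\cong \pi_*\mathcal{O}_{\widetilde{X_0}}$ locally. If $k=0$, then $\mathcal{F}$ is locally free and the claim is the locally free theorem above, so we assume $k\ge 1$. Equivalently, $\mathcal{F}$ corresponds to a generalized parabolic bundle $(\widetilde F, Q)$ on $\widetilde{X_0}$. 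Here $\widetilde F=\pi^*\mathcal{F}/\mathrm{torsion}$ has rank $n$. The gluing datum is $Q\subset \widetilde F_{x^+}\oplus\widetilde F_{x^-}$ of dimension $n$, with $\mathcal{F}=\ker(\pi_*\widetilde F\to (\widetilde F_{x^+}\oplus\widetilde F_{x^-})/Q)$, so $\deg \mathcal{F}=\deg\widetilde F+1$ (with the usual normalization, $\deg\mathcal{F}=\chi(\mathcal{F})-n\chi(\mathcal{O}_{X_0})$). A logarithmic connection $\mathcal{F}\to\mathcal{F}\otimes\omega_{X_0}$ is then the same as a logarithmic connection $\widetilde\nabla$ on $\widetilde F$ with poles at $x^\pm$ whose residues $(R^+,R^-)$ make the induced endomorphism of $\widetilde F_{x^+}\oplus \widetilde F_{x^-}$ preserve $Q$ (with sign conventions from $\omega_{X_0}$ pulling back to $\Omega^1(x^++x^-)$, residues summing to zero).

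\emph{Necessity.} Suppose $\nabla$ exists. The residue theorem on $\widetilde{X_0}$ gives $\deg\widetilde F=-\tr R^+-\tr R^-$. Compatibility with $Q$ forces a relation between these traces coming from the node. I would pass to $\det$, or better to the "trace" of $\nabla$ on $\mathcal{F}$ via its local form, to show that $\mathrm{tr}(R^+)+\mathrm{tr}(R^-)$ equals $-(\deg\widetilde F)$ together with the correction coming from the non-free summands. For this I would compute on the model $\mathfrak{m}_{x_0}=\pi_*\mathcal{O}$. A connection on $\pi_*\mathcal{O}$ with values in $\omega_{X_0}$ lifts to $d+\alpha$ on $\mathcal{O}_{\widetilde{X_0}}$ with $\alpha\in\Omega^1(x^++x^-)$, and its residues are unconstrained by gluing. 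This bare residue count does not immediately force degree $0$. So the real input must be indecomposability, as in Weil/Atiyah: the obstruction is the Atiyah class $a(\mathcal{F})\in \mathrm{Ext}^1(\mathcal{F},\mathcal{F}\otimes\omega_{X_0})\cong \mathrm{End}(\mathcal{F})^\vee$, by Serre--Grothendieck duality on the Gorenstein curve $X_0$.

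\emph{Main argument (both directions).} Following Atiyah, a connection exists iff the functional $\phi\mapsto \langle a(\mathcal{F}),\phi\rangle$ on $\mathrm{End}(\mathcal{F})$ vanishes, and $\langle a(\mathcal{F}),\phi\rangle$ should equal $\tr(\phi\, a(\mathcal{F}))$. Since $\mathcal{F}$ is indecomposable and $X_0$ is projective, $\mathrm{End}(\mathcal{F})$ is a local algebra $\mathbb{C}\cdot\id\oplus N$ with $N$ nilpotent. For nilpotent $\phi$, the pairing vanishes: filter $\mathcal{F}$ by $\ker\phi^i$, which are torsion-free subsheaves, and use the functoriality of the Atiyah class, so that $\tr(\phi\,a)$ is a sum of traces of $a$ on graded pieces composed with zero maps. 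For $\phi=\id$, the pairing gives $\tr a(\mathcal{F})\in H^1(\omega_{X_0})\cong\mathbb{C}$, which must be identified with a nonzero multiple of $\deg\mathcal{F}$. Hence a connection exists iff $\deg\mathcal{F}=0$.

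\emph{Main obstacle.} I expect the identity $\tr a(\mathcal{F})=c\cdot\deg\mathcal{F}$ for non-locally-free $\mathcal{F}$ to be the hard step, since $\det\mathcal{F}$ is not a line bundle. I would handle it through the generalized parabolic description. A connection on $\mathcal{F}$ is the same as a $Q$-compatible log connection on $\widetilde F$, so the relevant obstruction lives in $\widetilde{X_0}$-data: $\mathrm{Ext}^1$ of the parabolic bundle against $\Omega^1(x^++x^-)$. There the trace of the obstruction is $\deg\widetilde F$ plus the contribution of the $Q$-condition. Checking on the rank-one model $\pi_*\mathcal{O}_{\widetilde{X_0}}$ should show that this contribution is exactly $+k$ in the appropriate normalization, i.e. $\deg\mathcal{F}$. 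Sufficiency would then also follow concretely: starting from a connection on the locally free theorem side (or Weil on $\widetilde{X_0}$ with residues adjusted), choose residues preserving $Q$.
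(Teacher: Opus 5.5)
\textbf{The statement is false for non-locally-free sheaves, so the step you flag cannot be completed.} Your main argument is essentially the paper's own. Both use the Atiyah class in $\operatorname{Ext}^1(\mathcal{F},\mathcal{F}\otimes\omega_{X_0})$, a Serre-duality pairing with $\operatorname{End}(\mathcal{F})$, the Fitting decomposition $\lambda\cdot\id+N$, vanishing on nilpotents, and $\tr\operatorname{At}(\mathcal{F})=\deg\mathcal{F}$ for the identity; the paper asserts this last identity via a smoothing argument. Once $\mathcal{F}$ is not locally free, the ``only if'' direction fails, and your own Necessity computation already shows it.

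You found that a connection on $\pi_*\mathcal{O}_{\widetilde{X_0}}$ is just $d+\alpha$ with $\alpha\in\Omega^1_{\widetilde{X_0}}(x^++x^-)$ and unconstrained residues. You then concluded that indecomposability must do the work. But every rank-one torsion-free sheaf on the integral curve $X_0$ is indecomposable. Globally, for any line bundle $L$ on $\widetilde{X_0}$, the isomorphism $\pi^*\omega_{X_0}\cong\Omega^1_{\widetilde{X_0}}(x^++x^-)$ and the projection formula give $\pi_*L\otimes\omega_{X_0}\cong\pi_*\bigl(L\otimes\Omega^1_{\widetilde{X_0}}(x^++x^-)\bigr)$. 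Hence $\pi_*\widetilde\nabla$ is a connection on $\pi_*L$ for every logarithmic connection $\widetilde\nabla$ on $L$ with poles at $x^\pm$. Such $\widetilde\nabla$ always exist because $H^1(\widetilde{X_0},\Omega^1_{\widetilde{X_0}}(x^++x^-))=0$, while $\deg\pi_*L=\deg L+1$ is arbitrary. The simplest instance is the ideal sheaf $I_x$ of the node: $\deg I_x=-1$, yet $d\colon\mathcal{O}_{X_0}\to\omega_{X_0}$ restricts to a connection $I_x\to I_x\omega_{X_0}=\pi_*\Omega^1_{\widetilde{X_0}}$. Locally this holds because $dz=z\eta$ and $dw=-w\eta$ for the generator $\eta$ of $\omega_{X_0}$.

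\textbf{Where the argument breaks.} It fails exactly where it imports smooth-case tools.
\begin{enumerate}
\item For non-locally-free $\mathcal{F}$, $\operatorname{Ext}^1(\mathcal{F},\mathcal{F}\otimes\omega_{X_0})$ is not $\operatorname{End}(\mathcal{F})^\vee$. The local-to-global sequence acquires the term $H^0(\mathcal{E}xt^1(\mathcal{F},\mathcal{F}\otimes\omega_{X_0}))$ supported at the node. For $\mathcal{F}=\pi_*\mathcal{O}_{\widetilde{X_0}}$ one has $H^1(\End\mathcal{F}\otimes\omega_{X_0})=0$ and $\operatorname{Ext}^1\cong\mathbb{C}^2$, whereas $\operatorname{End}(\mathcal{F})=\mathbb{C}$.
\item Even the first term is dual to $H^0((\End\mathcal{F})^\vee)$ rather than $H^0(\End\mathcal{F})$, since $\End\mathcal{F}$ is not self-dual; for example $(\pi_*\mathcal{O}_{\widetilde{X_0}})^\vee=I_x$.
\item There is no trace $\End\mathcal{F}\to\mathcal{O}_{X_0}$: $\End(I_x)=\pi_*\mathcal{O}_{\widetilde{X_0}}$, and its generic trace is the identity, which does not land in $\mathcal{O}_{X_0}$.
\end{enumerate}

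\textbf{The same failure in generalized parabolic terms.} Write $\mathcal{F}=\{f\in\pi_*\widetilde E: Af(x^+)=f(x^-)\}$ as in Proposition~\ref{Trip}, so that $\deg\widetilde E=\deg\mathcal{F}$. A connection then amounts to a logarithmic $\widetilde\nabla$ on $\widetilde E$ whose residues $R^\pm=\operatorname{res}_{x^\pm}\widetilde\nabla$ satisfy $AR^+=-R^-A$. The residue theorem gives $\tr R^++\tr R^-=-\deg\mathcal{F}$. The compatibility, however, only ties $R^+$ on $\widetilde E_{x^+}/\ker A$ to $-R^-$ on $\operatorname{im}A$. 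When $\operatorname{rank}A<n$, which is precisely the case $k\ge 1$, the traces on $\ker A$ and on $\widetilde E_{x^-}/\operatorname{im}A$ are free, so no degree constraint survives.

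So the ``$+k$ contribution'' you expected on the rank-one model does not exist: $\operatorname{At}(\pi_*\mathcal{O}_{\widetilde{X_0}})=0$ while $\deg\pi_*\mathcal{O}_{\widetilde{X_0}}=1$. The paper's deformation proof of $\tr\operatorname{At}(\mathcal{F})=c_1(\mathcal{F})$ fails for the same reason, so it cannot fill your gap. The equivalence can at best hold for $k=0$, where your reduction to the locally free theorem is fine.

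Separately, your degree bookkeeping is off. With $\widetilde F=\pi^*\mathcal{F}/\mathrm{tors}$, the gluing space $Q$ has dimension $n+k$, not $n$, and $\deg\mathcal{F}=\deg\widetilde F+k$, not $\deg\widetilde F+1$.
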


\begin{theorem}[Scalar residues -- torsion-free case]
Let $\mathcal{F}$ be an indecomposable torsion-free coherent sheaf of rank $n$ and degree zero on $X_0$. Then there exists a holomorphic logarithmic connection on $\mathcal{F}$ inducing a logarithmic connection on the associated vector bundle $\widetilde{E}$ on the normalization with scalar residues $\lambda\cdot I$ at $x^+$ and $-\lambda\cdot I$ at $x^-$, for some $\lambda\in\mathbb{C}$.
\end{theorem}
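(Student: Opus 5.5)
The plan is to reduce to the normalization, using the standard description of torsion-free sheaves on $X_0$ as generalized parabolic bundles. Let $\mathcal{F}$ be indecomposable, torsion-free, of rank $n$ and degree zero. Set $\widetilde{E}=\pi^*\mathcal{F}/\mathrm{torsion}$. This is a vector bundle on $\widetilde{X_0}$, and $\mathcal{F}$ is recovered as the kernel of a map
\[
\pi_*\widetilde{E}\to \pi_*\widetilde{E}|_{\{x^+,x^-\}}/F,
\]
where $F\subset \widetilde{E}_{x^+}\oplus\widetilde{E}_{x^-}$ is an $n$-dimensional subspace. In the locally free case $F$ is the graph of an isomorphism $\widetilde{E}_{x^+}\to\widetilde{E}_{x^-}$, and the previous theorem covers that case. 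For a general torsion-free $\mathcal{F}$, $F$ is a degenerate gluing datum.

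Local sections of $\omega_{X_0}$ near the node are pushforwards of forms on $\widetilde{X_0}$ with at most simple poles at $x^\pm$ whose residues sum to zero. Hence a logarithmic connection $\nabla\colon\mathcal{F}\to\mathcal{F}\otimes\omega_{X_0}$ should correspond to a logarithmic connection $\widetilde\nabla$ on $\widetilde{E}$ with poles at $x^\pm$ that satisfies two conditions. First, the residue-sum condition holds. Second, the pair $(\mathrm{Res}_{x^+}\widetilde\nabla,\mathrm{Res}_{x^-}\widetilde\nabla)$ acting on $\widetilde{E}_{x^+}\oplus\widetilde{E}_{x^-}$ must be compatible with $F$, and the induced operator on $F$ must match. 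Scalar residues $(\lambda I,-\lambda I)$ satisfy the sum condition automatically. For them, compatibility reduces to $F$ being invariant under $\mathrm{diag}(\lambda I,-\lambda I)$. This is automatic when $\lambda=0$, and otherwise requires $F=(F\cap\widetilde{E}_{x^+})\oplus(F\cap\widetilde{E}_{x^-})$. So the first step is to make this dictionary precise.

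The second step is to build $\widetilde\nabla$. By the residue theorem for logarithmic connections, $\deg\widetilde{E}=-\operatorname{tr}(\lambda I)-\operatorname{tr}(-\lambda I)=0$. We also have $\deg\widetilde{E}=\deg\mathcal{F}-(\text{defect})$ with the paper's conventions. When $\widetilde{E}$ has degree zero, which is the case for locally free $\mathcal{F}$, I would take $\lambda=0$ and ask for a holomorphic connection on $\widetilde{E}$ that respects $F$. For general $F$, I would instead use the Atiyah–Weil obstruction for logarithmic connections with prescribed residues: the obstruction lies in $H^1(\End\widetilde{E}\otimes K)^\vee\cong H^0(\End\widetilde{E})$. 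Concretely, $\widetilde{E}$ admits a logarithmic connection with residues $A^\pm$ at $x^\pm$ if and only if, for every $\phi\in H^0(\End\widetilde{E})$,
\[
\tr(\phi\, at(\widetilde{E}))+\tr(\phi_{x^+}A^+)+\tr(\phi_{x^-}A^-)=0.
\]
Take $A^\pm=\pm\lambda I$, so the residue terms become $\lambda(\tr\phi_{x^+}-\tr\phi_{x^-})$. Replacing $\phi$ by its nilpotent-plus-semisimple decomposition, we only need to check this on endomorphisms $\phi$ that preserve $F$, because only those descend to $\mathcal{F}$. Indecomposability of $\mathcal{F}$ gives $\mathrm{End}(\mathcal{F})=\mathbb{C}\oplus\text{nilpotent}$. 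For nilpotent $\phi$ every term vanishes. For $\phi=\id$ the condition reads $\deg\widetilde{E}+\lambda(n-n)=0$, which holds. The remaining issue is that the obstruction is computed against all of $H^0(\End\widetilde{E})$, not just the $F$-preserving part. I would handle this by working directly with the Atiyah sequence of $\mathcal{F}$, namely $0\to\End\mathcal{F}\otimes\omega_{X_0}\to\dots$, whose obstruction space pairs by Serre duality on $X_0$ with $H^0(\mathcal{E}nd\,\mathcal{F})$. Grothendieck duality with $\omega_{X_0}$ makes this pairing valid even though $\mathcal{F}$ is not locally free. The extension class is the Atiyah class, and its trace against $\id$ is $\deg\mathcal{F}$, which vanishes. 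Nilpotent $\phi$ pair to zero by the standard Atiyah argument (filtration by $\ker\phi^k$ and traces of a nilpotent operator).

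For the scalar-residue refinement, I would then show the connection can be chosen with residues $(\lambda I,-\lambda I)$. Starting from any $\nabla$, adjust by global sections of $\mathcal{E}nd\,\mathcal{F}\otimes\omega_{X_0}$, which allows the residue pair to be changed by $(\psi_{x^+},-\psi_{x^-})$ locally. Using the structure of $F$ from the classification of indecomposable torsion-free modules at a node (sums of $\mathcal{O}$ and the maximal ideal $\mathfrak{m}$), we can conjugate the residues to scalars. The main obstacle is this last point, together with the duality step for non-locally-free $\mathcal{F}$. Specifically, residues of a connection compatible with a degenerate $F$ need not be semisimple, and killing the nilpotent part via $H^0(\mathcal{E}nd\,\mathcal{F}\otimes\omega_{X_0})$ needs a dimension count via Riemann–Roch on $X_0$. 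I would first try the case where $\mathcal{F}$ is locally $\mathfrak{m}^{\oplus k}\oplus\mathcal{O}^{\oplus(n-k)}$ and take $\lambda=0$ on the $\mathcal{O}$-part.
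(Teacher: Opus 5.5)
Your proposal stops at exactly the step that carries the content, and that step cannot be completed. The statement is false as formulated, and your own prescribed-residue criterion shows why.

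\textbf{A sign correction.} The local generator of $\omega_{X_0}$ at the node pulls back to a form with residues $(1,-1)$. So the operator that must preserve the gluing subspace is $\operatorname{diag}(R^+,-R^-)$, not $\operatorname{diag}(R^+,R^-)$. For $R^\pm=\pm\lambda I$ this operator is $\lambda I$, so scalar residues impose \emph{no} condition on the gluing, for any $\lambda$. (Also, with $\widetilde E=\pi^*\mathcal F/\mathrm{tors}$ the gluing subspace is $n$-dimensional only when $\mathcal F$ is locally free. You should use the Hecke-modified bundle of Proposition~\ref{Trip}, for which $\deg\widetilde E=\deg\mathcal F$.)

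\textbf{Reduction to the normalization.} The statement is therefore equivalent to a question on the smooth curve $\widetilde{X_0}$ alone: does $\widetilde E$ carry a logarithmic connection with residues $\lambda I$ at $x^+$ and $-\lambda I$ at $x^-$? In your criterion the residue terms $\lambda(\tr\phi_{x^+}-\tr\phi_{x^-})$ vanish, because $\tr\phi$ is a constant function on $\widetilde{X_0}$. So the condition becomes $\tr(\phi\,\operatorname{At}(\widetilde E))=0$ for \emph{every} $\phi\in H^0(\widetilde{X_0},\End\widetilde E)$. That means $\widetilde E$ has a holomorphic connection, i.e.\ every indecomposable summand of $\widetilde E$ has degree $0$.

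Your reduction to ``endomorphisms $\phi$ that preserve $F$'' throws away precisely the endomorphisms that obstruct. Neither $\deg\widetilde E=0$ nor indecomposability of $\mathcal F$ on $X_0$ gives the summand condition. The proposed repair (adjusting by $H^0(\End\mathcal F\otimes\omega_{X_0})$ and ``conjugating the residues to scalars'') therefore has nothing to act on.

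\textbf{Counterexample.} Take $X_0$ the nodal cubic, $\widetilde E=\mathcal O_{\mathbb P^1}(1)\oplus\mathcal O_{\mathbb P^1}(-1)$, and glue by $A=\begin{pmatrix}0&\mu\\1&0\end{pmatrix}$ with $\mu\neq0$. For $\phi=\begin{pmatrix}\alpha&s\\0&\delta\end{pmatrix}\in H^0(\End\widetilde E)$, the descent condition $\phi(x^-)A=A\phi(x^+)$ forces $\alpha=\delta$ and $s(x^+)=s(x^-)=0$. Hence $\End E$ is local, and $E$ is an indecomposable vector bundle of degree $0$.

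Suppose $\pi^*\nabla$ had residues $\lambda I$ and $-\lambda I$. Compressing it to the summand $\mathcal O(1)$ gives a logarithmic connection on $\mathcal O(1)$ with residues $\lambda$ and $-\lambda$. The residue theorem would then give $\deg\mathcal O(1)=-(\lambda-\lambda)=0$, a contradiction.

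In your criterion the obstructing element is the projection onto $\mathcal O(1)$. It does not preserve $\Gamma_A$, because $A$ interchanges the summands. Note that $E$ does carry logarithmic connections, e.g.\ with residues $\operatorname{diag}(0,1)$ at $x^+$ and $\operatorname{diag}(-1,0)$ at $x^-$. So only the scalar-residue refinement fails, not the existence theorem.

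\textbf{Where the paper's proof breaks.} It fails at the same point. It identifies the Serre-dual space $H^0(X_0,\mathcal G^\vee)$ with gluing-preserving endomorphisms and applies Fitting's lemma there. But already in the locally free case, by the paper's own intermediate formula, that space is the kernel of a trace condition inside $H^0(\widetilde{X_0},\End\widetilde E)$. In this example it contains the non-nilpotent, non-descending element $\operatorname{diag}(1,-1)$.

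What is actually true is the criterion above: the desired connection exists if and only if every indecomposable summand of $\widetilde E$ has degree zero.
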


These results furnish a clean existence criterion for holomorphic logarithmic connections directly on the singular curve, complementing Bhosle's representation-theoretic work. The proofs adapt Atiyah's classical strategy to the dualizing sheaf $\omega_{X_0}$, with careful analysis of gluing conditions and residue compatibility at the node. We also establish precise comparisons between Atiyah classes on $X_0$ and on its normalization. As a concrete illustration, we construct explicit one-parameter families of flat connections on indecomposable bundles on the rational nodal cubic curve.

\section{holomorphic logarithmic connections: Locally free case}

In this section we study holomorphic logarithmic connections on locally free sheaves on the nodal curve $X_0$. Let $X_0$ be an irreducible projective nodal curve of arithmetic genus $g\geq 1$, obtained by identifying two points $x^+$ and $x^-$ on its normalization $\pi\colon \widetilde{X_0}\to X_0$. Let $\omega_{X_0}$ denote the dualizing sheaf of $X_0$. We begin by recalling the definition of the dualizing sheaf $\omega_{X_0}$.

There are natural identifications 
\[
\Omega^1_{\widetilde{X_0}}(x^++x^-))_{x^+} \xrightarrow{\sim} \mathbb{C}, 
\qquad
\Omega^1_{\widetilde{X_0}}(x^++x^-))_{x^-} \xrightarrow{\sim} \mathbb{C}
\]
given by the Poincaré residue map (in a local coordinate $z$ near the point, it sends $dz/z \mapsto 1$).

The dualizing sheaf $\omega_{X_0}$ can be defined via the following exact sequence:
\begin{equation}
0 \to \omega_{X_0} \to \pi_*\Omega^1_{\widetilde{X_0}}(x^++x^-) \to \mathbb{C} \to 0,
\end{equation}
where the surjection $\pi_*\Omega^1_{\widetilde{X_0}}(x^++x^-)\to\mathbb{C}$ is the composition
\[
\pi_*\Omega^1_{\widetilde{X_0}}(x^++x^-)\ \longrightarrow\ 
\bigl(\Omega^1_{\widetilde{X_0}}(x^++x^-)\bigr)_{x^+} \oplus 
\bigl(\Omega^1_{\widetilde{X_0}}(x^++x^-)\bigr)_{x^-}
\ \longrightarrow\ \mathbb{C},
\]
and the second arrow is the sum of the two Poincaré residue maps.

\begin{definition}
    A \emph{holomorphic logarithmic connection} on a locally free $\mathcal{O}_{X_0}$-module $E$ is a $\mathbb{C}$-linear morphism 
    \[
    \nabla \colon E \to E \otimes \omega_{X_0}
    \]
    satisfying the Leibniz rule.
\end{definition}

\begin{remark}
    Let $(E, \nabla)$ be a holomorphic logarithmic connection on $X_0$. On the normalization $\widetilde{X_0}$, the pullback $\widetilde{E} := \pi^*E$ carries a logarithmic connection
    \[
    \widetilde{\nabla} := \pi^*\nabla \colon \widetilde{E} \to \widetilde{E} \otimes \Omega^1_{\widetilde{X_0}}(x^++x^-).
    \]
    Moreover, the gluing isomorphism $A \colon \widetilde{E}_{x^+} \xrightarrow{\sim} \widetilde{E}_{x^-}$ of the vector bundle $E$ satisfies the residue compatibility condition
    \[
    -A \circ \operatorname{res}_{x^-}\widetilde{\nabla} = \operatorname{res}_{x^+}\widetilde{\nabla} \circ A.
    \]
    Equivalently, up to the identification given by $A$, we have $\operatorname{res}_{x^-}\widetilde{\nabla} = -\operatorname{res}_{x^+}\widetilde{\nabla}$. (The minus sign arises because the two fibers of $\omega_{X_0}$ at the node are identified by multiplication by $-1$ in the definition of the dualizing sheaf.)
\end{remark}

\begin{remark}
    The residue of a connection $(E,\nabla)$ at the node $x$ is well-defined only up to scalar. More precisely, the evaluation of $\nabla$ at $x$ gives a map
    \[
    \nabla_x \colon E_x \to E_x \otimes \omega_{X_0,x},
    \]
    which defines an element of $\End(E_x) \otimes \omega_{X_0,x}$. Since $\omega_{X_0,x} \cong \mathbb{C}$, this corresponds to an endomorphism of $E_x$ only after choosing a local generator of $\omega_{X_0,x}$.
\end{remark}

\section{Analogue of Andre Weil's theorem: Locally free case}

Let $E$ be a locally free $\mathcal{O}_{X_0}$-module. The obstruction to the existence of a holomorphic logarithmic connection $\nabla: E\to E\otimes \omega_{X_0}$ is the Atiyah class
\[
[At(E)]\in H^1(X_0, \End E\otimes \omega_{X_0}),
\]
defined as follows.

\begin{definition}
    Let $E$ be a locally free $\mathcal{O}_{X_0}$-module. Choose an affine open covering $\{U_i\}$ of $X_0$ such that $E|_{U_i}\xrightarrow{\phi_i} \mathcal{O}_{U_i}^{\oplus n}$ is an isomorphism for each $i$. For each $i$, choose any connection $\nabla_i: E|_{U_i}\to E|_{U_i}\otimes \omega_{U_i}$ using the trivialization $\phi_i$. Set
    \[
    \alpha_{ij} := \nabla_j - \nabla_i \in \Gamma(U_{ij}, \End E \otimes \omega_{X_0}).
    \]
    The collection $\{\alpha_{ij}\}$ is a Čech $1$-cocycle with values in $\End E \otimes \omega_{X_0}$, and its cohomology class
    \[
    At(E) = [\{\alpha_{ij}\}] \in H^1(X_0, \End E \otimes \omega_{X_0})
    \]
    is called the \emph{Atiyah class} of $E$.
\end{definition}
    \smallskip
    \textbf{Description in terms of Čech cocycles.}
Suppose \(E|_{U_i} \cong \mathcal{O}_{U_i}^{\oplus n}\) via trivializations \(\phi_i\) for \(i \in I\). On each \(U_i\), choose a local connection of the form
\[
\nabla_i = d + A_i,
\]
where \(A_i\) is a matrix-valued section of \(\omega_{X_0}|_{U_i}\).

These local connections glue to a global holomorphic logarithmic connection on \(E\) if and only if they are compatible on overlaps \(U_{ij} = U_i \cap U_j\). Let \(\phi_{ij} = \phi_i \circ \phi_j^{-1}\) be the transition function. A direct computation shows that the compatibility condition
\[
\phi_i^{-1} \circ (\nabla_i) \circ \phi_i = \phi_j^{-1} \circ (\nabla_j) \circ \phi_j
\]
is equivalent to
\[
A_j - \phi_{ij}^{-1} A_i \phi_{ij} = \phi_{ij}^{-1} \, d(\phi_{ij}).
\]
Thus the differences \(\phi_{ij}^{-1} \, d(\phi_{ij})\) form a Čech \(1\)-cocycle with values in \(\End(E) \otimes \omega_{X_0}\). The cohomology class of this cocycle is independent of the choices of trivializations and local connections, and is called the \emph{Atiyah class} of \(E\):
\[
At(E) = \bigl[ \phi_{ij}^{-1} \, d(\phi_{ij}) \bigr] \in H^1(X_0, \End(E) \otimes \omega_{X_0}).
\]

\begin{lemma}
    On a projective nodal curve \(X_0\), the two definitions of the first Chern class / degree of a line bundle \(L\) coincide:
    \[
    c_1(L) \;\in\; H^1(X_0, \omega_{X_0}) \qquad \text{(Čech definition)}
    \]
    corresponds, under the canonical isomorphism \(H^1(X_0, \omega_{X_0}) \cong \mathbb{C}\), to the integer
    \[
    \deg(L) := \chi(L) - \chi(\mathcal{O}_{X_0}) \;\in\; \mathbb{Z} \qquad \text{(numerical definition)}.
    \]
\end{lemma}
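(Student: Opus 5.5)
We interpret the Čech first Chern class as the image of the transition cocycle $\{g_{ij}\}$ of $L$ under the map $d\log\colon H^1(X_0,\mathcal{O}_{X_0}^*)\to H^1(X_0,\omega_{X_0})$, sending $g_{ij}\mapsto g_{ij}^{-1}dg_{ij}$. Here $d\colon\mathcal{O}_{X_0}\to\omega_{X_0}$ is the composite of the universal derivation $\mathcal{O}_{X_0}\to\Omega^1_{X_0}$ with the canonical map $\Omega^1_{X_0}\to\omega_{X_0}$. The canonical isomorphism $H^1(X_0,\omega_{X_0})\cong\mathbb{C}$ is the Serre-duality trace. We normalise it via the sequence defining $\omega_{X_0}$: the trace is the connecting map of that sequence, composed with the identification $H^1(\widetilde{X_0},\Omega^1_{\widetilde{X_0}})\cong\mathbb{C}$, up to the factor $2\pi i$ (or its inverse) used on $\widetilde{X_0}$. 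This is the convention under which $\deg$ and the Chern class should match. The plan is to reduce the statement to the smooth case on $\widetilde{X_0}$, where it is classical.

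\textbf{Step 1 (both sides are additive).} The map $L\mapsto c_1(L)$ is a group homomorphism $\operatorname{Pic}(X_0)\to H^1(X_0,\omega_{X_0})$, because $d\log$ turns products into sums. The numerical degree $\chi(L)-\chi(\mathcal{O}_{X_0})$ is also additive on $\operatorname{Pic}(X_0)$. To see this, write $L\cong\mathcal{O}_{X_0}(D)$ for a Cartier divisor $D$ supported on smooth points, which is possible since $X_0$ is projective and integral. Then the sequences $0\to L(-p)\to L\to \mathbb{C}_p\to 0$ give $\chi(L)=\chi(\mathcal{O}_{X_0})+\deg D$. Hence it suffices to check the statement for $L=\mathcal{O}_{X_0}(p)$ with $p$ a smooth point, where the numerical degree is $1$.

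\textbf{Step 2 (compute $c_1(\mathcal{O}_{X_0}(p))$).} Cover $X_0$ by $U_0=X_0\setminus\{p\}$ and a small affine $U_1\ni p$ avoiding the node, with local coordinate $z$ centred at $p$. The transition function is $g_{01}=z$, so the cocycle is $dz/z$ on $U_{01}$. Since $U_1$ misses the node, $\omega_{X_0}|_{U_1}=\Omega^1$, and the Čech class of $dz/z$ on $U_0\cap U_1$ maps to $\operatorname{res}_p(dz/z)=1$ under the residue/trace map. To justify this last point, pull back along $\pi$. The map $\omega_{X_0}\hookrightarrow\pi_*\Omega^1_{\widetilde{X_0}}(x^++x^-)$ induces a map on $H^1$. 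By the long exact sequence, $H^1(X_0,\omega_{X_0})\to H^1(\widetilde{X_0},\Omega^1(x^++x^-))$ is surjective, and the latter group vanishes when $x^\pm$ are distinct points. So we must instead use the connecting map $H^0(\mathbb{C})=\mathbb{C}\to H^1(\omega_{X_0})$, which is an isomorphism for $g\ge1$ by a dimension count ($h^0(\omega_{X_0})=g$ and $h^0(\Omega^1(x^++x^-))=g$).

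\textbf{Step 3 (the main obstacle).} We must compare the Čech class of $dz/z$ with the image of $1$ under this connecting map. We do this by a direct cochain computation. Take the global meromorphic-type cochain $(0\text{ on }U_0,\; dz/z\text{ on }U_1)$ of $\pi_*\Omega^1(x^++x^-)$; this is not possible as stated, since $dz/z$ has a pole at $p$. So we modify the approach. Choose a section $\eta$ of $\Omega^1_{\widetilde{X_0}}(x^++x^-+p)$ with residues $1$ at $x^+$, $0$ at $x^-$, and $-1$ at $p$; such an $\eta$ exists by the residue theorem and Riemann–Roch. On $U_1$ write $\eta=-dz/z+h$ with $h$ holomorphic. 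Then $\eta|_{U_0}$ is a lift of $1\in\mathbb{C}$, so the connecting map sends $1$ to the cocycle $\eta|_{U_{01}}-(\eta-(-dz/z)\text{-part})$, which is cohomologous to $\pm\,dz/z$.

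Tracking this sign carefully, together with the $2\pi i$ normalisation, is the delicate step. With the residue-sum convention of the defining sequence, one expects the answer $c_1(\mathcal{O}(p))\mapsto 1$, which gives $\deg$. Finally, linearity (Step 1) finishes the proof for arbitrary $L$.
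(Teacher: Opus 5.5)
Your proof is correct in substance, but it takes a different route from the paper's.

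\textbf{The paper's route.} It treats all $L$ at once. It chooses a rational section $s=f_ie_i$ and notes that $d\log g_{ij}=df_i/f_i-df_j/f_j$ is the \v{C}ech coboundary of a meromorphic $0$-cochain. It then invokes a residue theorem on $X_0$ to say that the trace of this class is the sum of the residues of the $df_i/f_i$, i.e.\ $\deg\operatorname{div}(s)$. Finally it simply declares the numerical degree to be $\deg\operatorname{div}(s)$.

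\textbf{Your route.} You prove that last identification, by writing $L\cong\mathcal{O}_{X_0}(D)$ with $D$ away from the node and using the skyscraper sequences. You use additivity of both sides to reduce to $\mathcal{O}_{X_0}(p)$. You then make the isomorphism $H^1(X_0,\omega_{X_0})\cong\mathbb{C}$ explicit as the inverse of the connecting map $\delta$ of the sequence defining $\omega_{X_0}$. You correctly show $\delta$ is an isomorphism, using $H^1(\widetilde{X_0},\Omega^1(x^++x^-))=0$ and $h^0(\omega_{X_0})=h^0(\Omega^1(x^++x^-))=g$. The auxiliary form $\eta$ then gives $\delta(1)=\pm[dz/z]=\pm c_1(\mathcal{O}_{X_0}(p))$.

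\textbf{What each buys.} The paper's argument is shorter and uniform in $L$, but its key step is a black box: that the trace equals a sum of residues on the singular curve, including what happens at the node. Yours uses only the classical residue theorem on the smooth curve $\widetilde{X_0}$ (to build $\eta$). It is also phrased in terms of the very sequence the paper uses to define $\omega_{X_0}$.

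\textbf{Things to tidy.} Your opening description of the trace, as the connecting map composed with $H^1(\widetilde{X_0},\Omega^1_{\widetilde{X_0}})\cong\mathbb{C}$, is wrong: the connecting map goes $\mathbb{C}\to H^1(X_0,\omega_{X_0})$. Replace it with what you reach in Step~2, and cut the abandoned attempts in Steps~2 and~3.

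The closing ``one expects'' is also unnecessary. The sign relating $\delta(1)$ and $[dz/z]$ depends only on the \v{C}ech and transition-function conventions, not on $p$ or $L$. So what you have proved is $c_1(L)=\deg(L)\cdot v$ for one fixed nonzero $v\in H^1(X_0,\omega_{X_0})$. Normalising the isomorphism by $v\mapsto 1$ (i.e.\ taking it to be $\pm\delta^{-1}$) gives the statement, and no $2\pi i$ enters in this algebraic normalisation. The paper's proof is no more precise about this normalisation.
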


\begin{proof}
    Since \(X_0\) is Gorenstein (as any nodal curve is), the dualizing sheaf \(\omega_{X_0}\) is invertible, and the Riemann--Roch theorem holds for every line bundle on \(X_0\):
    \[
    \chi(L) = \deg(L) + 1 - p_a(X_0),
    \]
    where \(p_a(X_0)\) is the arithmetic genus and \(\deg(L)\) is defined numerically as \(\chi(L) - \chi(\mathcal{O}_{X_0})\). Thus it suffices to show that the Čech class \(c_1(L) \in H^1(X_0, \omega_{X_0})\) corresponds exactly to this numerical degree under the trace/residue isomorphism \(H^1(X_0, \omega_{X_0}) \cong \mathbb{C}\).

    Let \(\{U_i\}\) be an open affine cover of \(X_0\) on which \(L\) is trivial, and let \(g_{ij} \in \mathcal{O}^*(U_i \cap U_j)\) be the transition functions of \(L\). The first Chern class is defined by
    \[
    c_1(L) = \bigl[ \{ d\log g_{ij} \} \bigr] \in H^1(X_0, \omega_{X_0}),
    \]
    where \(d\log g_{ij} = \frac{d g_{ij}}{g_{ij}}\).

    Now choose a rational (meromorphic) section \(s\) of \(L\). On each \(U_i\), we can write \(s = f_i \cdot e_i\), where \(e_i\) is a local trivializing section and \(f_i \in K(X_0)^*\) (the function field). Then on overlaps we have
    \[
    f_i = g_{ij} f_j,
    \]
    so
    \[
    d\log g_{ij} = d\log(f_i / f_j) = \frac{df_i}{f_i} - \frac{df_j}{f_j}.
    \]
    Thus the cocycle \(\{d\log g_{ij}\}\) is precisely the Čech representative of the divisor \(\operatorname{div}(s)\) (the divisor of zeros minus poles of \(s\)) with values in \(\omega_{X_0}\) (the counts of zeros and poles are converted into computation of residues of the differential form).

    By the residue theorem on the Gorenstein curve \(X_0\) (or equivalently, by the definition of the trace map \(\operatorname{Tr} : H^1(X_0, \omega_{X_0}) \to \mathbb{C}\)), the image of the class \([\{d\log g_{ij}\}]\) in \(\mathbb{C}\) is exactly the sum of the residues of the logarithmic differential, which equals the degree of the divisor \(\operatorname{div}(s)\):
    \[
    \operatorname{Tr}\bigl(c_1(L)\bigr) = \deg(\operatorname{div}(s)).
    \]

    Since this is independent of the choice of the rational section \(s\), and the numerical degree \(\deg(L)\) is defined to be the degree of any divisor associated to \(L\) (i.e., \(\deg(\operatorname{div}(s))\)), we obtain
    \[
    \operatorname{Tr}\bigl(c_1(L)\bigr) = \deg(L).
    \]
    Therefore the two definitions coincide.

    This completes the proof.
\end{proof}

\begin{lemma}
    The trace of the Atiyah class coincides with the first Chern class:
    \[
    \operatorname{tr}\bigl(\operatorname{At}(E)\bigr) = c_1(E) \in H^1(X_0, \omega_{X_0}) \cong \mathbb{C}.
    \]
\end{lemma}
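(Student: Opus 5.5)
The plan is to compute the trace at the level of Čech cocycles, using the description of $\operatorname{At}(E)$ by the cocycle $\{\phi_{ij}^{-1}\,d\phi_{ij}\}$ given above. Here $c_1(E)$ means $c_1(\det E)$, and the identification with $\mathbb{C}$ is the trace isomorphism from the preceding lemma. I would fix an affine open cover $\{U_i\}$ of $X_0$ on which $E$ is trivialized by $\phi_i$, with transition matrices $\phi_{ij}\in GL_n(\mathcal{O}(U_{ij}))$. The same cover trivializes $\det E$, with transition functions $g_{ij}=\det\phi_{ij}\in\mathcal{O}^*(U_{ij})$. By definition,
\[
c_1(E)=c_1(\det E)=\bigl[\{d\log\det\phi_{ij}\}\bigr]\in H^1(X_0,\omega_{X_0}).
\]

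The key step is the pointwise identity
\[
\operatorname{tr}\bigl(\phi_{ij}^{-1}\,d\phi_{ij}\bigr)=\frac{d(\det\phi_{ij})}{\det\phi_{ij}}
\]
of sections of $\omega_{X_0}$ over $U_{ij}$. This is Jacobi's formula $d\det M=\det M\cdot\operatorname{tr}(M^{-1}dM)$, and it holds for any derivation on a commutative ring. The only subtlety is that $d\colon\mathcal{O}_{X_0}\to\omega_{X_0}$ must be a derivation. I would justify this by composing the universal derivation $\mathcal{O}_{X_0}\to\Omega^1_{X_0}$ with the natural map $\Omega^1_{X_0}\to\omega_{X_0}$. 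Concretely, one pulls back to $\widetilde{X_0}$: for $f$ regular at the node, $d(\pi^*f)$ is holomorphic at $x^\pm$, so its residues vanish and it lies in $\omega_{X_0}$. Since Jacobi's formula is purely algebraic (expand $\det$ via cofactors and apply the Leibniz rule), it then holds verbatim.

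Next I would check that the trace map $\operatorname{tr}\colon\End E\otimes\omega_{X_0}\to\omega_{X_0}$ is $\mathcal{O}$-linear and commutes with Čech differentials, so it induces a well-defined map on $H^1$ that can be computed on representatives. It is also compatible with changes of trivialization: conjugating by $\phi_i$ does not change the trace, and the coboundary ambiguity $A_j-\phi_{ij}^{-1}A_i\phi_{ij}$ has trace $\operatorname{tr}A_j-\operatorname{tr}A_i$, which is again a coboundary. Combining this with the identity above gives $\operatorname{tr}\operatorname{At}(E)=c_1(\det E)$ as classes in $H^1(X_0,\omega_{X_0})$. Applying the preceding lemma then identifies the result with $\deg\det E=\deg E$ under $H^1(X_0,\omega_{X_0})\cong\mathbb{C}$.

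I expect the main obstacle to be the well-definedness of $d$ into $\omega_{X_0}$ at the node, together with matching the conventions: the definition's cocycle $\phi_{ij}^{-1}d\phi_{ij}$ must agree, up to the same sign and ordering, with the $d\log g_{ij}$ used in the previous lemma. I would fix the convention $\phi_{ij}=\phi_i\circ\phi_j^{-1}$ throughout and verify the signs directly for line bundles, where the two cocycles literally coincide. The remaining steps are formal.
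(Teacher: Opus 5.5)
Your proposal is correct and follows the same route as the paper. Both arguments apply the trace to the Čech representative $\{\phi_{ij}^{-1}d\phi_{ij}\}$, use $\operatorname{tr}(\phi_{ij}^{-1}d\phi_{ij}) = d\log(\det\phi_{ij})$, and recognize the result as the standard representative of $c_1(\det E) = c_1(E)$. Your extra checks are details the paper leaves implicit: that $d$ lands in $\omega_{X_0}$ at the node via $\pi_*\Omega^1_{\widetilde{X_0}}\subset\omega_{X_0}$, that Jacobi's formula holds for a derivation into a module, and that the trace is compatible with Čech differentials and with changes of trivialization.
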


\begin{proof}
    Assume \(E\) is a vector bundle on the nodal curve \(X_0\) (i.e., locally free). Let \(\{U_i\}\) be an open cover of \(X_0\) such that \(E\) is trivial on each \(U_i\), with transition functions \(\phi_{ij} \in \operatorname{GL}(r, \mathcal{O}_{U_i \cap U_j})\).

    The Atiyah class \(\operatorname{At}(E)\) is represented (in Čech cohomology) by the cocycle
    \[
    a_{ij} = \phi_{ij}^{-1} \, d\phi_{ij} \in H^1(X_0, \End(E) \otimes \omega_{X_0}).
    \]

    Applying the trace morphism \(\operatorname{tr} \colon \End(E) \to \mathcal{O}_{X_0}\) gives
    \[
    \operatorname{tr}(a_{ij}) = \operatorname{tr}(\phi_{ij}^{-1} \, d\phi_{ij}) = d\log(\det \phi_{ij}).
    \]

    The 1-cocycle \(\{d\log(\det \phi_{ij})\}\) is the standard Čech representative of the first Chern class \(c_1(\det E)\) in \(H^1(X_0, \omega_{X_0})\). Since \(c_1(E) := c_1(\det E)\), we obtain
    \[
    \operatorname{tr}\bigl(\operatorname{At}(E)\bigr) = c_1(E) \in H^1(X_0, \omega_{X_0}).
    \]
\end{proof}

\begin{remark}
    Consequently, the vanishing of the Atiyah class $At(E)$ implies $c_1(E) = 0$. In particular, $\deg E = 0$. Indeed, since $\deg E = \deg(\pi^*E)$, it suffices to show that $c_1(\pi^*E) = 0$. One may choose the affine open cover $\{U_i\}$ so that all overlaps $U_{ij}$ avoid the node. On such a cover the transition functions of $E$ coincide with those of its pullback $\pi^*E$, and thus $[d\log(\det \phi_{ij})]$ is precisely the first Chern class of $\pi^*E$. Hence $c_1(E) = 0$ forces $\deg E = 0$.
\end{remark}

\begin{remark}
    The relationship between the Atiyah classes on the nodal curve and its normalization can be made precise as follows. Consider the short exact sequence of sheaves
    \[
    0 \to \pi_*\Omega^1_{\widetilde{X_0}} \to \omega_{X_0} \to \mathcal{O}_x \to 0.
    \]
    The associated long exact sequence in cohomology begins with
    \[
    0 \to H^0(\widetilde{X_0}, \Omega^1_{\widetilde{X_0}}) \to H^0(X_0, \omega_{X_0}) \to \mathbb{C} \to H^1(\widetilde{X_0}, \Omega^1_{\widetilde{X_0}}) \to H^1(X_0, \omega_{X_0}) \to 0.
    \]
    Since $\dim H^0(\widetilde{X_0}, \Omega^1_{\widetilde{X_0}}) = g-1$ and $\dim H^0(X_0, \omega_{X_0}) = g$ (where $g$ denotes the arithmetic genus of $X_0$), the connecting homomorphism $\mathbb{C} \to H^1(\widetilde{X_0}, \Omega^1_{\widetilde{X_0}})$ is surjective, and the induced map
    \[
    H^1(\widetilde{X_0}, \Omega^1_{\widetilde{X_0}}) \xrightarrow{\sim} H^1(X_0, \omega_{X_0})
    \]
    is an isomorphism. It follows that the Atiyah class $At(E) = [\phi_{ij}^{-1}d\phi_{ij}] \in H^1(X_0, \omega_{X_0})$ is the image of the Atiyah class $At(\widetilde{E}) \in H^1(\widetilde{X_0}, \End\widetilde{E} \otimes \Omega^1_{\widetilde{X_0}})$ of the pullback bundle under this isomorphism. In other words, the two classes contain equivalent information.
\end{remark}

\begin{theorem}\label{Weil}
    Let $E$ be an indecomposable vector bundle of rank $n$ on the nodal curve $X_0$. Then $E$ admits a holomorphic logarithmic connection $\nabla: E\to E\otimes \omega_{X_0}$ if and only if $\deg E = 0$.
\end{theorem}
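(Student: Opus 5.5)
The plan is to follow Atiyah's proof of Weil's theorem, with $\Omega^1$ replaced by the dualizing sheaf $\omega_{X_0}$ and Serre duality on the Gorenstein curve $X_0$ doing the work that Serre duality does on a smooth curve.

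\emph{Necessity.} If $\nabla$ exists, the local connections $\nabla_i$ can be taken to be restrictions of $\nabla$. Then the cocycle $\alpha_{ij}=\nabla_j-\nabla_i$ vanishes identically, so $At(E)=0$. By the lemma on traces of Atiyah classes, $c_1(E)=\operatorname{tr}At(E)=0$ in $H^1(X_0,\omega_{X_0})$. By the lemma comparing the \v{C}ech and numerical degrees, together with the remark following it, this gives $\deg E=0$. This direction does not use indecomposability.

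\emph{Sufficiency.} Here I would show $At(E)=0$ whenever $E$ is indecomposable of degree $0$. Since $X_0$ is Gorenstein and $\omega_{X_0}$ is invertible, Serre duality gives a perfect pairing
\[
H^1(X_0,\End E\otimes\omega_{X_0})\times H^0(X_0,\End E)\to H^1(X_0,\omega_{X_0})\cong\mathbb{C},\qquad (\beta,\phi)\mapsto \operatorname{Tr}\tr(\beta\circ\phi),
\]
using $(\End E)^\vee\cong\End E$ via the trace form. So it suffices to show that $\operatorname{Tr}\tr(At(E)\circ\phi)=0$ for every global endomorphism $\phi$. Since $E$ is indecomposable, $H^0(\End E)$ is a finite-dimensional local algebra: by Fitting's lemma every $\phi$ has the form $\phi=c\cdot\id+N$ with $c\in\mathbb{C}$ and $N$ nilpotent. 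The term with $\id$ contributes $c\cdot\operatorname{Tr}c_1(E)=c\deg E=0$. It remains to show that $\tr(At(E)\circ N)=0$ for nilpotent $N$.

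\emph{Nilpotent term.} This is Atiyah's argument. Consider the filtration $0\subset\ker N\subset\ker N^2\subset\cdots\subset E$. Over the smooth locus the subsheaves $\ker N^k$ are subbundles, since they are saturated. Choose local trivializations adapted to this filtration, so that the transition functions $\phi_{ij}$ are block upper triangular. Because $N$ is a global endomorphism, in these frames it is strictly block upper triangular (it maps $\ker N^{k}$ into $\ker N^{k-1}$). The product $\phi_{ij}^{-1}d\phi_{ij}\cdot N$ is then strictly block upper triangular and has trace zero, so the cocycle $\tr(At(E)N)$ vanishes.

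The main obstacle is the node. There the kernels $\ker N^k$ need not be locally free, so adapted trivializations may fail to exist in a neighbourhood of the node. My first attempt would be to sidestep this with a cover whose overlaps avoid the node, as in the earlier remark. Then all the $\phi_{ij}$ live on the smooth locus, and only the trivialization on the chart around the node must be compatible with the filtration on overlaps. If that fails, the fallback is to pull back to the normalization. Use $\widetilde{E}=\pi^*E$ and the remark identifying $At(E)$ with the image of $At(\widetilde{E})$. Then $\pi^*N$ is a nilpotent endomorphism of $\widetilde{E}$ preserving the gluing $A$, and the filtration by saturations of $\ker(\pi^*N)^k$ is by genuine subbundles on the smooth curve, so Atiyah's trace computation applies there. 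Finally, transport the vanishing back through the isomorphism $H^1(\widetilde{X_0},\Omega^1_{\widetilde{X_0}})\cong H^1(X_0,\omega_{X_0})$.
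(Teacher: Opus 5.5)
Your proposal is correct, and its skeleton matches the paper's: Serre duality on the Gorenstein curve, Fitting's decomposition $\phi=c\cdot\id+N$, the scalar term giving $c\deg E$, and a flag argument for $N$. The two differ at the nilpotent step.

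The paper stays on $X_0$. It asserts that each $\ker N^j$ is a subbundle of $E$ and uses flag-adapted frames on a cover containing the node. This is your ``first attempt'', and your suspicion of it is justified: it fails in general. Block upper triangular transition functions $\phi_{0j}$ would force the frame on the node chart $U_0$ to be adapted to $\ker N^\bullet$ on the overlaps. The span of its first vectors would then be a subbundle of $E|_{U_0}$ agreeing with the saturated sheaf $\ker N^j$ on a dense open set, hence equal to it. So $\ker N^j$ would be locally free at the node, which need not hold. For example, on the nodal cubic glue $\widetilde E=\mathcal{O}(1)\oplus\mathcal{O}(-1)$ on $\mathbb{P}^1$ by an $A$ that does not send $\mathcal{O}(1)_{x^+}$ into $\mathcal{O}(1)_{x^-}$. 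Then $H^0(\End E)=\mathbb{C}\,\id\oplus\mathbb{C}N$, where $N$ comes from a section of $\mathcal{O}(2)$ vanishing at $x^{\pm}$. So $E$ is indecomposable of degree $0$, yet $\ker N\cong\pi_*\mathcal{O}_{\mathbb{P}^1}(-1)$ is not locally free. Consequently no complete flag of subbundles with $N(E_i)\subset E_{i-1}$ exists at all.

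So drop the first attempt and make the normalization argument the proof; it is sound and needs less than you invoke. Since $d\colon\mathcal{O}_{X_0}\to\omega_{X_0}$ factors through $\pi_*\Omega^1_{\widetilde{X_0}}$, the cocycles $\phi_{ij}^{-1}d\phi_{ij}$ show that $At(E)=\iota(At(\widetilde E))$. Here $\iota\colon H^1(\widetilde{X_0},\End\widetilde E\otimes\Omega^1_{\widetilde{X_0}})\cong H^1(X_0,\End E\otimes\pi_*\Omega^1_{\widetilde{X_0}})\to H^1(X_0,\End E\otimes\omega_{X_0})$ comes from the projection formula and the inclusion $\pi_*\Omega^1_{\widetilde{X_0}}\subset\omega_{X_0}$. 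Naturality gives $\operatorname{tr}(N\cdot At(E))=\iota_0\bigl(\operatorname{tr}(\pi^*N\cdot At(\widetilde E))\bigr)$, with $\iota_0\colon H^1(\widetilde{X_0},\Omega^1_{\widetilde{X_0}})\to H^1(X_0,\omega_{X_0})$. The inner class vanishes by Atiyah's computation on the smooth curve, where the $\ker(\pi^*N)^k$ are already subbundles.

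You need neither that $\pi^*N$ respects $A$ nor any injectivity of $\iota$. The latter matters, because the paper's claim that the two Atiyah classes ``contain equivalent information'' is false with $\End$ coefficients: $\iota$ kills the image of $\End(E_x)$. In the example above, $At(\widetilde E)\neq 0$ even though $E$ carries a logarithmic connection. Passing to $\widetilde{X_0}$ supplies exactly the local freeness that the paper's direct argument assumes without proof.
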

\begin{proof}
    It suffices to show that $[At(E)] = 0$ if and only if $\deg E = 0$. Consider the perfect pairing coming from Serre duality:
    \[
    H^0(\End E) \times H^1(X_0, \End E \otimes \omega_{X_0}) \xrightarrow{\cup} H^1(X_0, \End E \otimes \omega_{X_0}) \xrightarrow{\operatorname{tr}} H^1(X_0, \omega_{X_0}) \cong \mathbb{C}.
    \]
    Thus we need $\langle \phi, At(E) \rangle = 0$ for all $\phi \in \End E$ if and only if $\deg E = 0$.

    Since $E$ is indecomposable, every global endomorphism has the form $\phi = \lambda \cdot \operatorname{Id}_E + N$, where $\lambda \in \mathbb{C}$ and $N$ is nilpotent. Since $E$ is indecomposable, every global endomorphism has the form $\phi = \lambda \cdot \operatorname{Id}_E + N$, where $\lambda \in \mathbb{C}$ and $N$ is nilpotent. This is a consequence of the well-known Fitting Lemma \cite[Proposition 7.4, Page 441]{Lang} and the fact that $End(E)$ is a finite dimensional vector space. 
    
    We first show that $\langle N, At(E) \rangle = 0$ for any nilpotent endomorphism $N$. 

Since \(N \in H^0(X_0, \End(E))\) is nilpotent, i.e., \(N^k = 0\) for some \(k \geq 1\), one can construct a complete flag of subbundles
\[
0 = E_0 \subset E_1 \subset E_2 \subset \cdots \subset E_r = E,
\]
where each \(E_i\) is a subbundle of rank \(i\), such that \(N(E_i) \subset E_{i-1}\) for all \(i\).

This flag (often called the \emph{nilpotent flag} associated to \(N\)) can be constructed explicitly as follows. Begin with the kernel filtration (also known as the Fitting filtration) of \(N\):
\[
0 \subset \ker N \subset \ker(N^2) \subset \ker(N^3) \subset \cdots \subset \ker(N^k) = E.
\]
Each \(\ker(N^j)\) is a subbundle of \(E\) because \(N\) is a morphism of vector bundles. This filtration may have jumps of rank greater than 1. To obtain a complete flag (i.e., \(\rank(E_i/E_{i-1}) = 1\) for each \(i\)), refine it by successively choosing complementary subbundles.

More precisely, suppose we have already constructed \(E_{i-1}\). Since \(N\) is nilpotent, the restriction \(N \colon E / E_{i-1} \to E / E_{i-2}\) is well-defined. One can choose a rank-one subbundle \(L_i \subset E / E_{i-1}\) such that \(N(L_i) \subset E_{i-1}/E_{i-2}\). Let \(E_i\) be the preimage of \(L_i\) under the quotient map \(E \twoheadrightarrow E / E_{i-1}\). Repeating this process yields the desired complete flag satisfying \(N(E_i) \subset E_{i-1}\) for all \(i\). 

One can choose a trivializing affine open cover \(\{U_i\}\) of \(X_0\) together with local frames \(\phi_i\) adapted to the flag such that, with respect to these frames, the transition functions \(g_{ij} = \phi_i \circ \phi_j^{-1}\) are upper triangular and the matrix representing \(N\) is strictly upper triangular.

It follows that the Čech cocycle representative \(\phi_{ij}^{-1} d(\phi_{ij})\) of the Atiyah class \(\operatorname{At}(E)\) is also upper triangular. Consequently, on each overlap, the pointwise matrix product
\[
N \cdot \bigl( \phi_{ij}^{-1} d(\phi_{ij}) \bigr)
\]
is strictly upper triangular. The trace of a strictly upper triangular matrix vanishes, so
\[
\operatorname{tr}\bigl( N \cdot (\phi_{ij}^{-1} d(\phi_{ij})) \bigr) = 0.
\]
This shows that the pair \(\langle N, \operatorname{At}(E) \rangle = 0\).

On the other hand, for the scalar endomorphism, we have
\[
\langle \lambda \cdot \operatorname{Id}_E, \operatorname{At}(E) \rangle 
= n\lambda \cdot \langle \operatorname{Id}_E, \operatorname{At}(E) \rangle 
= n\lambda \cdot \operatorname{tr}\bigl(\operatorname{At}(E)\bigr) 
= n\lambda \cdot c_1(E).
\]
Therefore, \(\langle \phi, \operatorname{At}(E) \rangle = 0\) for every global endomorphism \(\phi \in H^0(X_0, \End E)\) if and only if \(c_1(E) = 0\), which is equivalent to \(\deg E = 0\).
\end{proof}

\begin{theorem}\label{Weil2}
    Let $E$ be an indecomposable vector bundle of rank $n$ and degree $0$. Then $E$ admits a holomorphic logarithmic connection $\nabla: E\to E\otimes \omega_{X_0}$ such that the induced logarithmic connection $(\widetilde{E} := \pi^*E, \widetilde{\nabla}: \widetilde{E}\to \widetilde{E}\otimes \Omega^1_{\widetilde{X_0}}(x^++x^-))$ has residues of the form $\lambda \cdot I_{\widetilde{E}_{x^+}}$ at $x^+$ and $-\lambda \cdot I_{\widetilde{E}_{x^-}}$ at $x^-$, for some scalar $\lambda \in \mathbb{C}$.
\end{theorem}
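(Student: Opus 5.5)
Start from the connection provided by Theorem~\ref{Weil}, then correct its residues using the structure of indecomposable bundles.

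\textbf{Step 1 (a starting connection).} Since $\deg E=0$ and $E$ is indecomposable, Theorem~\ref{Weil} gives a holomorphic logarithmic connection $\nabla_0\colon E\to E\otimes\omega_{X_0}$. Any other such connection has the form $\nabla=\nabla_0+\theta$ with $\theta\in H^0(X_0,\End E\otimes\omega_{X_0})$. Pull back to $\widetilde E=\pi^*E$. At the node, $\widetilde\nabla_0$ has residues $R^\pm=\operatorname{res}_{x^\pm}\widetilde\nabla_0$. By the residue compatibility remark these satisfy $R^+\circ A=-A\circ R^-$. So it suffices to find $\theta$ whose residue at $x^+$ equals $\lambda I-R^+$. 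The residue at $x^-$ is then forced to be $-(\lambda I-R^+)$ transported by $A$, which gives total residue $-\lambda I$ at $x^-$.

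\textbf{Step 2 (reduction to a surjectivity statement).} Consider the residue map at the node,
\[
\operatorname{Res}\colon H^0(X_0,\End E\otimes\omega_{X_0})\to \End(E_x)\cong\End(\widetilde E_{x^+}).
\]
It comes from the exact sequence
\[
0\to \End E\otimes\pi_*\Omega^1_{\widetilde{X_0}}\to\End E\otimes\omega_{X_0}\to\End(E_x)\to0 .
\]
The obstruction to lifting a given $M\in\End(E_x)$ is its image under the connecting map into $H^1(\widetilde{X_0},\End\widetilde E\otimes\Omega^1_{\widetilde{X_0}})$. By Serre duality on $\widetilde{X_0}$, this obstruction vanishes if and only if
\[
\operatorname{tr}(\psi(x^+)\,M)=0
\]
for every $\psi\in H^0(\widetilde{X_0},\End\widetilde E)$ compatible with the gluing, i.e.\ for every $\psi\in H^0(X_0,\End E)$. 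This is the residue theorem, using that $\psi(x^-)=A\psi(x^+)A^{-1}$ and that the two residues of a section of $\omega_{X_0}$ sum to zero. Pairs $\psi$ on the normalization that are not compatible with the gluing do not enter, because of the gluing condition on $\omega_{X_0}$. Hence the admissible residues are exactly the annihilator of $H^0(X_0,\End E)$ under the trace pairing.

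\textbf{Step 3 (checking orthogonality).} We must find $\lambda$ with
\[
\operatorname{tr}\bigl(\psi(x)(\lambda I-R^+)\bigr)=0
\]
for all $\psi$. As in the proof of Theorem~\ref{Weil}, Fitting's lemma gives $\psi=\mu\,\mathrm{Id}+N$ with $N$ nilpotent.
\begin{itemize}
\item For the scalar part, the condition reads $n\lambda-\operatorname{tr}R^+=0$, so take $\lambda=\tfrac1n\operatorname{tr}R^+$.
\item For the nilpotent part, we need $\operatorname{tr}(N(x)R^+)=0$, since $\operatorname{tr}N(x)=0$. Here we reuse the argument of Theorem~\ref{Weil}. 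The pairing $\langle N,\operatorname{At}(E)\rangle$ vanishes. Moreover $\operatorname{tr}(N(x)R^+)$ is, up to sign, precisely the contribution of the residue of $\nabla_0$ to that pairing. Concretely, $\operatorname{tr}(N\nabla_0)$ makes sense as a global section of $\omega_{X_0}$ modulo exact terms. Its residue at $x^+$ is $\operatorname{tr}(N(x)R^+)$, plus terms that cancel because $N$ is flat-compatible. Alternatively, we can choose $\nabla_0$ adapted to the flag of $N$ so that $R^+$ is upper triangular and $N(x)R^+$ is strictly upper triangular.
\end{itemize}

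\textbf{Main obstacle.} The hard step is Step 3 in the nilpotent case. Different nilpotents $N$ have different flags, so a single $\nabla_0$ cannot be upper triangular for all of them simultaneously. I would first try the intrinsic version. The element $\operatorname{tr}(N\circ\nabla_0)-\operatorname{tr}(\nabla_0\circ N)$-type expression, namely $\operatorname{tr}(N\,\widetilde\nabla_0)$ viewed on $\widetilde{X_0}$, gives a meromorphic form whose residues sum to zero. Its residues at $x^\pm$ should cancel pairwise through the gluing, leaving a condition that follows from $\langle N,\operatorname{At}(E)\rangle=0$. If this fails, the fallback is linear algebra. The set of nilpotent $\psi(x)$ forms a nilpotent subspace of the local algebra $\operatorname{End}(E)$, which has the form $\mathbb{C}\oplus J$ with $J$ the radical. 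The radical $J$ is simultaneously strictly triangularizable, by Engel's theorem applied to the nilpotent associative algebra $J$. So a single common flag exists, and one can pick $\nabla_0$ adapted to it, which gives the vanishing for all nilpotent parts at once.
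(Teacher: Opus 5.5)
\textbf{The gap is in Step 2, and the statement itself turns out to be false in this generality.}

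Your duality criterion in Step 2 is backwards. Serre duality on $\widetilde{X_0}$ pairs $H^1(\widetilde{X_0},\End\widetilde{E}\otimes\Omega^1_{\widetilde{X_0}})$ with \emph{all} of $H^0(\widetilde{X_0},\End\widetilde{E})$, not just the gluing-compatible part. A local lift of $M$ to $\End E\otimes\omega_{X_0}$ has residue $M$ at $x^+$ and $-AMA^{-1}$ at $x^-$. So the connecting map sends $M$ to the functional $\psi\mapsto\operatorname{tr}\bigl((\psi(x^+)-A^{-1}\psi(x^-)A)M\bigr)$.

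For gluing-compatible $\psi$ this functional is identically zero. Hence $H^0(X_0,\End E)$ imposes no condition at all, and the constraints come precisely from the $\psi$ you discard. Your criterion is already refuted by $\theta=\eta\cdot\id_E$, where $\eta\in H^0(X_0,\omega_{X_0})$ has residue $1$ at the node. Such $\eta$ exists because $h^0(X_0,\omega_{X_0})=g>g-1=h^0(\widetilde{X_0},\Omega^1_{\widetilde{X_0}})$. Then $\operatorname{Res}\theta=I$, even though $\operatorname{tr}(\id\cdot I)=n\neq0$. In particular $\lambda$ is not forced to equal $\frac1n\operatorname{tr}R^+$; it can be shifted freely.

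So Step 3 checks the wrong condition. Its nilpotent half is also not an argument as written: the ``flat-compatible'' cancellation is unexplained, and a flag for the radical of $\End E$ need not consist of subbundles at the node.

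Now correct Step 2. Because $\operatorname{tr}\psi$ is constant on $\widetilde{X_0}$, the scalar $\lambda$ drops out of the correct condition. The residue formula for the Atiyah class of the logarithmic connection $\widetilde{\nabla}_0$ gives $\operatorname{tr}\bigl((\psi(x^+)-A^{-1}\psi(x^-)A)R^+\bigr)=\pm\langle\psi,\operatorname{At}(\widetilde{E})\rangle$. Hence a connection with residues $\lambda I$ and $-\lambda I$ exists if and only if $\pi^*E$ admits a holomorphic connection on $\widetilde{X_0}$. Equivalently, all indecomposable summands of $\pi^*E$ must have degree $0$, and indecomposability of $E$ does not give this.

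Concretely, on the nodal cubic, glue $\widetilde{E}=\mathcal{O}(1)\oplus\mathcal{O}(-1)$ on $\mathbb{P}^1$ by an $A$ with $A(\mathcal{O}(1)_{x^+})\neq\mathcal{O}(1)_{x^-}$.
\begin{itemize}
\item The resulting $E$ has degree $0$.
\item $E$ is indecomposable. In a splitting $E=E_1\oplus E_2$, one summand would pull back to the unique $\mathcal{O}(1)\subset\widetilde{E}$, forcing $A(\mathcal{O}(1)_{x^+})=\mathcal{O}(1)_{x^-}$.
\item Every logarithmic connection on $\widetilde{E}$ with poles at $x^\pm$ preserves $\mathcal{O}(1)$. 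Its second fundamental form lies in $\Hom(\mathcal{O}(1),\mathcal{O}(-1)\otimes\Omega^1_{\mathbb{P}^1}(x^++x^-))=\Hom(\mathcal{O}(1),\mathcal{O}(-1))=0$.
\item Therefore its residues on $\mathcal{O}(1)$ sum to $-1$, which is impossible if they are $\lambda$ and $-\lambda$.
\end{itemize}

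The paper's own proof does not avoid this. It makes the same restriction, passing from $H^0(\widetilde{X_0},\pi^*\End E)$ to $\End E$ when computing $H^0(X_0,\mathcal{F}^\vee)$, so it too only tests against endomorphisms of $E$. An extra hypothesis on $\pi^*E$ is needed.
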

\begin{proof}
Let $\mathcal{F}$ denote the subsheaf of $\pi_*(\End \widetilde{E})$ defined as follows. Consider the diagram
\begin{equation}
    \begin{tikzcd}[column sep=1.8em, row sep=1.8em]
    0 \arrow{r} 
      & \pi_*(\End\widetilde{E}\otimes I_{\{x^+,x^-\}}) \arrow["="]{d} \arrow{r} 
      & j^{-1}(\End\widetilde{E}_{x^+}) = \End E \arrow[hook]{d} \arrow{r} 
      & \pi_*(\End\widetilde{E}_{x^+}) \arrow[hook,"\Delta"]{d} \arrow{r} 
      & 0 \\
    0 \arrow{r} 
      & \pi_*(\End\widetilde{E}\otimes I_{\{x^+,x^-\}}) \arrow{r} 
      & \pi_*(\End\widetilde{E}) \arrow["j"]{r} 
      & \pi_*(\End\widetilde{E}_{x^+}\oplus \End\widetilde{E}_{x^-}) \arrow{r} 
      & 0 \\
    0 \arrow{r} 
      & \pi_*(\End\widetilde{E}\otimes I_{\{x^+,x^-\}}) \arrow["="]{u} \arrow{r} 
      & j^{-1}(\End\widetilde{E}_{x^+}) \arrow[hook]{u} \arrow{r} 
      & \pi_*(\End\widetilde{E}_{x^+}) \arrow[hook,"\Delta^{Op}"]{u} \arrow{r} 
      & 0 \\
    0 \arrow{r} 
      & \pi_*(\End\widetilde{E}\otimes I_{\{x^+,x^-\}}) \arrow["="]{u} \arrow{r} 
      & \mathcal{F} := j^{-1}(\mathbb{C}I_{\widetilde{E}_{x^+}}) \arrow[hook]{u} \arrow{r} 
      & \mathbb{C}I_{\widetilde{E}_{x^+}} \arrow[hook]{u} \arrow{r} 
      & 0
    \end{tikzcd}
\end{equation}
Here $\Delta: \pi_*(\End\widetilde{E}_{x^+})\hookrightarrow \pi_*(\End\widetilde{E}_{x^+}\oplus \End\widetilde{E}_{x^-})$ sends $\phi\mapsto (\phi, A\circ \phi\circ A^{-1})$, and $\Delta^{Op}: \pi_*(\End\widetilde{E}_{x^+})\hookrightarrow \pi_*(\End\widetilde{E}_{x^+}\oplus \End\widetilde{E}_{x^-})$ sends $\phi\mapsto (\phi, -A\circ \phi\circ A^{-1})$, where $A: \widetilde{E}_{x^+}\to \widetilde{E}_{x^-}$ is the isomorphism that determines the vector bundle $E$ on the nodal curve $X_0$. By an abuse of notation, we denote by $\Delta$ and $\Delta^{Op}$ the respective images of these maps.

It is straightforward to check that $E$ admits a connection with the stated residue property if and only if the obstruction class $\beta_E \in H^1(X_0, \omega_{X_0}\otimes \mathcal{F})$ vanishes.

By Serre duality we have a non-degenerate pairing
\[
\langle -,-\rangle : H^0(X_0, \mathcal{F}^\vee) \times H^1(X_0, \omega_{X_0}\otimes \mathcal{F}) \to H^1(X_0, \omega_{X_0}) \cong \mathbb{C}.
\]

Now we need an interpretation of the vector space $H^0(X_0, \mathcal{F}^\vee)$. First, note that $\pi_*(\End\widetilde{E}\otimes I_{\{x^+,x^-\}}) \cong \End E \otimes I_x$, where $I_x$ is the ideal sheaf of the node. From the diagram we obtain the short exact sequence
\[
0 \to \End E \otimes I_x \to \mathcal{F} \to \Delta^{Op}(\mathbb{C} \cdot I_{\widetilde{E}_{x^+}}) \to 0.
\]
Dualizing yields
\begin{equation}\label{diag1}
\begin{tikzcd}
0\arrow{r} & \mathcal{F}^\vee\arrow{r} & (\End E)^\vee \otimes I_x^\vee \arrow{r}\arrow["\cong"]{d} & \mathcal Ext^1(\Delta^{Op}(\mathbb{C}\cdot I_{\widetilde{E}_{x^+}}), \mathcal{O}_{X_0})\arrow{r}\arrow["\cong"]{d} & \cdots \\
& & \End E \otimes \pi_*\mathcal{O}_{\widetilde{X_0}} & \Delta^{Op}(\mathbb{C}\cdot I_{\widetilde{E}_{x^+}})
\end{tikzcd}
\end{equation}
(Note that $I_x^\vee \cong \pi_*\mathcal{O}_{\widetilde{X_0}}$ canonically, and $(\End E)^\vee \cong \End E$ via the trace pairing.) Therefore,
\[
H^0(X_0, \mathcal{F}^\vee) = Ker \Bigl[ H^0(\End E \otimes \pi_*\mathcal{O}_{\widetilde{X_0}}) = H^0(\widetilde{X_0}, \pi^*\End E) \to \Delta^{Op}(\mathbb{C}\cdot I_{\widetilde{E}_{x^+}}) \Bigr].
\]
Analysing this map, we obtain
\begin{equation}\label{eqnew}
H^0(X_0, \mathcal{F}^\vee) = \{\phi \in \End E \mid \operatorname{tr}((\pi^*\phi)_{x^+}) = -\operatorname{tr}(\pi^*\phi)_{x^-}\}.
\end{equation}

Since $E$ is indecomposable, any global endomorphism is of the form $\phi = \lambda \cdot I + N$, where $\lambda$ is a scalar and $N$ is nilpotent. If $\phi$ also lies in $H^0(X_0, \mathcal{F}^\vee)$, then \eqref{eqnew} forces $\lambda = 0$. Thus $\phi$ is nilpotent. Using the same argument as in the proof of Proposition \ref{Weil}, we have $\langle N, \beta_E \rangle = 0$. Therefore $\beta_E = 0$, and the proof is complete.
\end{proof}

\section{holomorphic logarithmic connections on Torsion-Free Sheaves}

\begin{definition}
    Let \(\mathcal{F}\) be a torsion-free coherent sheaf on the nodal curve \(X_0\). A \emph{holomorphic logarithmic connection} on \(\mathcal{F}\) is a \(\mathbb{C}\)-linear morphism
    \[
    \nabla \colon \mathcal{F} \to \mathcal{F} \otimes \omega_{X_0}
    \]
    satisfying the Leibniz rule
    \[
    \nabla(f \cdot s) = f \cdot \nabla(s) + s \otimes df
    \]
    for all local sections \(f \in \mathcal{O}_{X_0}\) and \(s \in \mathcal{F}\).
\end{definition}

\subsection{Lift to the Normalization}

\begin{proposition}\label{Trip}
Let \(\mathcal{F}\) be a torsion-free coherent sheaf on the nodal curve \(X_0\) equipped with a holomorphic logarithmic connection \(\nabla \colon \mathcal{F} \to \mathcal{F} \otimes \omega_{X_0}\). Then there exists a \emph{unique} triple \((\widetilde{E}, A, \widetilde{\nabla})\) up to unique isomorphism, consisting of
\begin{itemize}
    \item a vector bundle \(\widetilde{E}\) on the normalization \(\pi \colon \widetilde{X_0} \to X_0\) of the same rank and degree as \(\mathcal{F}\),
    \item a linear gluing map \(A \colon \widetilde{E}_{x^+} \to \widetilde{E}_{x^-}\),
    \item a logarithmic connection \(\widetilde{\nabla} \colon \widetilde{E} \to \widetilde{E} \otimes \Omega^1_{\widetilde{X_0}}(x^+ + x^-)\),
\end{itemize}
such that the following hold:

\begin{enumerate}
    \item \(\mathcal{F}\) sits in the short exact sequence
    \[
    0 \to \mathcal{F} \to \pi_* \widetilde{E} \to \pi_* \Bigl( \frac{\widetilde{E}_{x^+} \oplus \widetilde{E}_{x^-}}{\Gamma_A} \Bigr) \to 0,
    \]
    where \(\Gamma_A\) is the graph of \(A\).

    \item The holomorphic logarithmic connection \(\nabla\) is compatible with \(\widetilde{\nabla}\) via the commutative diagram
    \[
    \begin{tikzcd}
    0 \arrow{r} & \mathcal{F} \arrow{r} \arrow["\nabla"]{d}
      & \pi_* \widetilde{E} \arrow{r} \arrow["\pi_* \widetilde{\nabla}"]{d}
      & \pi_* \Bigl( \frac{\widetilde{E}_{x^+} \oplus \widetilde{E}_{x^-}}{\Gamma_A} \Bigr) \arrow{r} \arrow["\pi_* \widetilde{\nabla}"]{d}
      & 0 \\
    0 \arrow{r} & \mathcal{F} \otimes \omega_{X_0} \arrow{r}
      & \pi_* \widetilde{E} \otimes \omega_{X_0} \arrow{r}
      & \pi_* \Bigl( \frac{\widetilde{E}_{x^+} \oplus \widetilde{E}_{x^-}}{\Gamma_A} \Bigr) \otimes \omega_{X_0} \arrow{r}
      & 0
    \end{tikzcd}
    \]

    This commutative diagram also explains how a compatible triple $(\tilde{E}, A, \widetilde{\nabla})$ induces a torsion-free sheaf $\mathcal F$ with a logarithmic connection $\nabla$.

    \item The residues of \(\widetilde{\nabla}\) satisfy the compatibility condition
    \[
    -A \circ \operatorname{res}_{x^-} \widetilde{\nabla} = \operatorname{res}_{x^+} \widetilde{\nabla} \circ A.
    \]

    \item For each \(\lambda, \mu \in \mathbb{C}\), let \(E^+_\lambda\) (resp.\ \(E^-_\mu\)) denote the generalized \(\lambda\)-eigenspace of \(\alpha\) on \(\widetilde{E}_{x^+}\) (resp.\ the generalized \(\mu\)-eigenspace of \(\beta\) on \(\widetilde{E}_{x^-}\)). Then
\[
A(E^+_\lambda) \subseteq E^-_{-\lambda}.
\]
\end{enumerate}

\end{proposition}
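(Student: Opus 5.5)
The plan is to construct the triple from $\mathcal{F}$ using only the local structure of torsion-free sheaves at the node, and then to extend $\nabla$ to it.

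\textbf{Step 1: choice of $\widetilde{E}$.} By Seshadri's local classification, the completed stalk of a torsion-free sheaf at the node is $\widehat{\mathcal{O}}_x^{\,a}\oplus \widehat{\mathfrak m}_x^{\,b}$ with $a+b=n$. I will take
\[
\widetilde{E} := \pi^*\mathcal{F}/\mathrm{torsion},
\]
which is locally free on the smooth curve $\widetilde{X_0}$. The adjunction map $\mathcal{F}\to\pi_*\widetilde{E}$ is injective because $\mathcal{F}$ is torsion-free and $\pi$ is an isomorphism away from the node. Its cokernel is supported at $x$, and there it is a quotient of $\widetilde{E}_{x^+}\oplus\widetilde{E}_{x^-}$ by the image of the fibre of $\mathcal{F}$.

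Checking on each local summand identifies this image with a graph $\Gamma_A$:
\begin{itemize}
\item an $\mathcal{O}$-summand contributes the diagonal;
\item an $\mathfrak m$-summand contributes nothing to the image, so the corresponding part of $A$ is degenerate.
\end{itemize}
This is where the gluing map $A$ must be allowed to be non-invertible. Following Bhosle's correspondence for generalized parabolic bundles, I will regard $\Gamma_A$ as an $n$-dimensional subspace of $\widetilde{E}_{x^+}\oplus\widetilde{E}_{x^-}$. This gives statement (1).

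The rank equality is clear. For the degree, the short exact sequence gives $\chi(\mathcal{F})=\chi(\widetilde{E})-n$ and $\chi(\mathcal{O}_{X_0})=\chi(\mathcal{O}_{\widetilde{X_0}})-1$. Hence $\deg\widetilde{E}=\deg\mathcal{F}$ with the normalisation $\deg\mathcal{F}=\chi(\mathcal{F})-n\chi(\mathcal{O}_{X_0})$.

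Uniqueness up to unique isomorphism follows because $\widetilde{E}$ is determined by $\mathcal{F}$ as $\pi^*\mathcal{F}/\mathrm{tors}$, equivalently as the $\pi_*\mathcal{O}_{\widetilde{X_0}}$-module generated by $\mathcal{F}$ inside $\mathcal{F}\otimes K$. The subspace $\Gamma_A$ is recovered as the image of $\mathcal{F}$, and $\widetilde{\nabla}$ is forced by (2). The last point holds because two connections agreeing on a subsheaf of full rank agree on $\widetilde{X_0}$, since they agree generically.

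\textbf{Step 2: extending $\nabla$ (the main obstacle).} I need to show that $\nabla$ extends to the $\pi_*\mathcal{O}_{\widetilde{X_0}}$-module generated by $\mathcal{F}$, with values in $\pi_*\widetilde{E}\otimes\omega_{X_0}$. Once this holds, $\pi_*\widetilde{E}\otimes\omega_{X_0}\cong\pi_*(\widetilde{E}\otimes\Omega^1_{\widetilde{X_0}}(x^++x^-))$ by the projection formula, since $\pi^*\omega_{X_0}=\Omega^1(x^++x^-)$.

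The extension is defined by the Leibniz rule,
\[
\widetilde{\nabla}(f s)=f\nabla s+s\otimes df, \qquad f\in\pi_*\mathcal{O}_{\widetilde{X_0}},\ s\in\mathcal{F}.
\]
The key local check is that $df$ lies in $\pi_*\Omega^1(x^++x^-)$ with zero total residue, so $df\in\omega_{X_0}$. Well-definedness holds because both sides agree on the smooth locus and the target is torsion-free.

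Commutativity of the left square is then automatic. The right vertical map is the induced map on quotients; it is well defined because $\widetilde{\nabla}$ preserves $\mathcal{F}$.

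\textbf{Step 3: residues.} For statements (3) and (4), I will take local coordinates $z$ at $x^+$ and $w$ at $x^-$ with $zw=0$ locally on $X_0$, so that $\omega_{X_0}$ is generated by $dz/z=-dw/w$. Preservation of $\mathcal{F}$ implies that $\mathrm{res}\,\widetilde{\nabla}=(\alpha,\beta)$ preserves $\Gamma_A$ once the sign identification of the fibres of $\omega_{X_0}$ is taken into account, i.e. $(\alpha,-\beta)$ preserves $\Gamma_A$. For invertible $A$ this is exactly
\[
A\alpha=-\beta A,
\]
which is (3), after matching the notation with the paper's convention.

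For (4), I will use the intertwining relation $A(\alpha-\lambda)^k=(-\beta-\lambda)^kA$. This sends $\ker(\alpha-\lambda)^k$ into $\ker(\beta+\lambda)^k$, which gives $A(E^+_\lambda)\subseteq E^-_{-\lambda}$. In the degenerate case I will apply the same argument to $\Gamma_A$ viewed as a correspondence.
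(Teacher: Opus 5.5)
\paragraph{Gap in Step 1.} Step 1 fails as soon as $\mathcal{F}$ is not locally free, and nothing later repairs it. With $\widetilde{E}=\pi^*\mathcal{F}/\mathrm{tors}$, an $\mathfrak{m}$-summand does not ``contribute nothing''. The maximal ideal of the node is the conductor, so $\pi^*\mathfrak{m}_x/\mathrm{tors}=\mathcal{O}_{\widetilde{X_0}}(-x^+-x^-)$ and $\mathfrak{m}_x=\pi_*\mathcal{O}_{\widetilde{X_0}}(-x^+-x^-)$. Hence $\mathcal{F}\to\pi_*\widetilde{E}$ is an isomorphism on that summand, and its image at the node is the whole two-dimensional fibre.

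If $\mathcal{F}\simeq\mathcal{O}^a\oplus\mathfrak{m}^b$ near the node, the image $N\subset\widetilde{E}_{x^+}\oplus\widetilde{E}_{x^-}$ therefore has dimension $n+b$. This has three consequences:
\begin{enumerate}
\item $N$ is not the graph of any linear map $\widetilde{E}_{x^+}\to\widetilde{E}_{x^-}$.
\item The cokernel has length $n-b$, not $n$.
\item Your Euler characteristic count actually gives $\deg\widetilde{E}=\deg\mathcal{F}-b$. For $\mathcal{F}=\mathfrak{m}_x$ you get $\mathcal{O}(-x^+-x^-)$, of degree $-2$, while $\deg\mathfrak{m}_x=-1$.
\end{enumerate}
You cannot simply ``regard'' $\Gamma_A$ as $n$-dimensional, because the subspace is forced on you as the image of $\mathcal{F}$. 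So (1), the degree claim, and your uniqueness argument (recovering $\Gamma_A$ as that image) all fail for this $\widetilde{E}$.

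\paragraph{The missing idea.} What you are missing is the paper's elementary (Hecke) modification. Keep $E:=\pi^*\mathcal{F}/\mathrm{tors}$ and put $K:=\ker(N\to E_{x^+})\subset E_{x^-}$, which has dimension $b$. Enlarge $E\subset\widetilde{E}$ at $x^-$ so that the fibre map $E_{x^-}\to\widetilde{E}_{x^-}$ has kernel exactly $K$. Then the image of $N$ projects isomorphically onto $\widetilde{E}_{x^+}$, so it is a graph $\Gamma_A$ (with $A$ of rank $a$, matching your intuition about degeneracy). Moreover $\deg\widetilde{E}=\deg E+b=\deg\mathcal{F}$.

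This creates an obligation absent from your Step 2. Your Leibniz extension of $\nabla$ to $E$ must extend further to $\widetilde{E}$. In a frame adapted to $K$, this happens if and only if $\operatorname{res}_{x^-}$ of the connection on $E$ preserves $K$. That invariance holds because the residue pair $(\alpha,-\beta)$ preserves $N$, so $(0,k)\in N$ forces $(0,\beta k)\in N$. The paper obtains this invariance by a diagram chase and realizes the extension through $K^\perp$ on the dual bundle.

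\paragraph{What survives.} Once this is in place, your Step 3 goes through, and it needs no invertibility. For every linear $A$, $(\alpha,-\beta)$ preserves $\Gamma_A$ if and only if $A\alpha=-\beta A$, and your generalized-eigenspace argument for (4) then applies verbatim. In the locally free case $b=0$ we have $K=0$, and your proof agrees with the paper's.
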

\begin{proof}
We first recall the construction of the pair \((\widetilde{E}, A)\) following \cite[pp.\ 104--106]{NSI}. The construction is local and extends to any nodal curve. We describe the technical details of the construction in the following steps. 

\begin{enumerate}
    \item Define 
    \[
    E \;:=\; \frac{\pi^*\mathcal{F}}{tors(\pi^*\mathcal{F})}.
    \]
    This is a vector bundle on \(\widetilde{X_0}\). The adjunction morphism induces a natural inclusion \(\mathcal{F} \hookrightarrow \pi_* E\), yielding the short exact sequence
    \[
    0 \to \mathcal{F} \longrightarrow \pi_* E \twoheadrightarrow T \to 0,
    \]
    where \(T := \pi_* E / \mathcal{F}\).

    \item Let \(N := \ker(E_{x^+} \oplus E_{x^-} \twoheadrightarrow T)\). By \cite{NSI}, \(N\) is the image of the canonical map \(\mathcal{F}_p \to (\pi_* E)_p\), and the projections \(N \to E_{x^+}\) and \(N \to E_{x^-}\) are surjective \cite[Remark 2.1]{NSI}.

    \item Set \(K := \ker(N \to E_{x^+})\). Then \(K \subseteq E_{x^-}\), because $N$ is the image of the canonical map \(\mathcal{F}_p \to (\pi_* E)_p\).

    \item Perform a Hecke modification \(i \colon E \twoheadrightarrow \widetilde{E}\) at \(x^-\) such that 
    \[
    \ker(i_{x^-} \colon E_{x^-} \to \widetilde{E}_{x^-}) = K.
    \]
    Let \(N^1\) be the image of \(N\) under the natural map 
    \[
    \theta \colon E_{x^+} \oplus E_{x^-} \to \widetilde{E}_{x^+} \oplus \widetilde{E}_{x^-}.
    \]
    Since no modification is performed at \(x^+\), the projection \(N^1 \twoheadrightarrow \widetilde{E}_{x^+}\) is an isomorphism (as \(\dim N^1 = rank \widetilde{E}\)). Moreover, \(N = \theta^{-1}(N^1)\).

    \item We have inclusions \(\mathcal{F} \hookrightarrow \pi_* E \hookrightarrow \pi_* \widetilde{E}\). Hence
    \[
    \mathcal{F} = \{ f \in \pi_* \widetilde{E} \mid \text{evaluation of } f \text{ at the node lies in } N^1 \}.
    \]
    The isomorphism \(N^1 \cong \widetilde{E}_{x^+}\) defines a unique linear map \(A \colon \widetilde{E}_{x^+} \to \widetilde{E}_{x^-}\) satisfying
    \[
    \mathcal{F} = \{ f \in \pi_* \widetilde{E} \mid A(f(x^+)) = f(x^-) \}.
    \]
\end{enumerate}

The uniqueness of the pair \((\widetilde{E}, A)\) is obvious from the construction. 

Now suppose \(\nabla\) is a holomorphic logarithmic connection on \(\mathcal{F}\). We construct the induced logarithmic connection \(\widetilde{\nabla}\) on \(\widetilde{E}\).

Since \(\pi\) is birational, there is a natural pullback connection \(\pi^*\nabla\) on \(\pi^*\mathcal{F}\). Because \(\pi^*\mathcal{F} = E \oplus tors(\pi^*\mathcal{F})\), it induces a logarithmic connection on \(E\), still denoted \(\pi^*\nabla \colon E \to E \otimes \Omega^1_{\widetilde{X_0}}(x^+ + x^-)\).

\

\textbf{Invariance of \(N\) and \(K\):}  
We have a commutative diagram
\[
\begin{tikzcd}
0 \rar & \mathcal{F} \rar \arrow[d,"\nabla"] & \pi_* E \rar \arrow[d,"\pi_*(\pi^*\nabla)"] & T \rar \arrow[dotted, "\nabla"]{d} & 0 \\
0 \rar & \mathcal{F} \otimes \omega_{X_0} \rar & \pi_* E \otimes \omega_{X_0} \rar & T \otimes \omega_{X_0} \rar & 0
\end{tikzcd}
\]
First, notice that from the construction of \(\pi^*\nabla\) on $E$ it follows that the square on the left commutes, and therefore induces a connection (also denoted by $\nabla$) on $T$. A straightforward diagram chase shows that \(\pi^*\nabla\) preserves \(N \subset E_{x^+} \oplus E_{x^-}\). Consequently, its residue at \(x^-\) preserves \(K = \ker(N \twoheadrightarrow E_{x^+})\).

\

\textbf{Connection preserves the Hecke modification:}  
 Let \((\pi^*\nabla)^*\) be the dual connection on \(E^*\) (which exists by the standard construction: \(\langle \nabla^* \phi, s \rangle + \langle \phi, \nabla s \rangle = d\langle \phi, s \rangle\)). We realize the Hecke modification using the dual bundle in the following steps.

\begin{enumerate}
    \item Let \(K^\perp \subset (E^*)_{x^-}\) be the orthogonal complement of \(K\) with respect to the canonical pairing.
    \item Define \(E' := \ker\bigl( E^* \twoheadrightarrow (E^*_{x^-})/K^\perp \bigr)\).
    \item Set \(\widetilde{E} := (E')^*\). This is the desired Hecke modification with \(\ker(\widetilde{E}_{x^-} \leftarrow E_{x^-}) = K\).
\end{enumerate}

The subspace \(K^\perp\) is invariant under \((\pi^*\nabla)^*\) because \(K\) is invariant under the residue of \(\pi^*\nabla\) and the pairing is compatible with the connections. The subspace \(K^\perp \subset (E^*)_{x^-}\) is invariant under the residue of the dual connection \((\pi^*\nabla)^*\). 

Fix \(p=x^-\). Let \(V=E_p\), \(V^*=(E^*)_p\), and let \(\langle\,\cdot\,,\,\cdot\,\rangle\) denote the canonical pairing. Let \(\alpha=\operatorname{res}_p(\widetilde{\nabla})\). By the diagram chase, \(K\subset V\) is \(\alpha\)-invariant. Define
\[
K^\perp=\{\phi\in V^*\mid \langle k,\phi\rangle=0\ \forall\,k\in K\}.
\]

The dual connection satisfies \(\operatorname{res}_p(\nabla^*)=-\alpha^*\), where \(\alpha^*\) is defined by the following property:
\[
\langle\alpha^*\phi,s\rangle:=\langle\phi,\alpha s\rangle.
\]

Now let \(\phi\in K^\perp\) and \(k\in K\). Then
\[
\langle k,\operatorname{res}_p(\nabla^*)\phi\rangle
= \langle k,(-\alpha^*)\phi\rangle
= -\langle\alpha k,\phi\rangle=0,
\]
Hence \(\operatorname{res}_p(\nabla^*)\phi\in K^\perp\).

Therefore, the Hecke modification \(E'=\ker\bigl(E^*\twoheadrightarrow(E^*_{x^-})/K^\perp\bigr)\) inherits a logarithmic connection. Dualizing then yields the desired connection \(\widetilde{\nabla}\) on \(\widetilde{E}\).

Finally, the residue compatibility
\[
-A \circ \operatorname{res}_{x^-} \widetilde{\nabla} = \operatorname{res}_{x^+} \widetilde{\nabla} \circ A
\]
follows from the commutative square
\[
\begin{tikzcd}
\widetilde{E}_{x^+} \arrow[r, "\operatorname{res}_{x^+} \widetilde{\nabla}"] \arrow[d, "A"] 
  & \widetilde{E}_{x^+} \arrow[d, "A"] \\
\widetilde{E}_{x^-} \arrow[r, "\operatorname{res}_{x^-} \widetilde{\nabla}"] 
  & \widetilde{E}_{x^-}
\end{tikzcd}
\]
which is obtained by evaluating the global commutative diagram
\[
\begin{tikzcd}
\mathcal{F} \arrow[r, "\nabla"] \arrow[d] 
  & \mathcal{F} \otimes \omega_{X_0} \arrow[d] \\
\pi_* \widetilde{E} \arrow[r, "\pi_* \widetilde{\nabla}"] 
  & \pi_* \widetilde{E} \otimes \omega_{X_0}
\end{tikzcd}
\]
at the node and using the defining property
\[
\mathcal{F} = \bigl\{ f \in \pi_* \widetilde{E} \ \big|\ A(f(x^+)) = f(x^-) \bigr\}
\]
together with the fact that \(\pi_* \widetilde{\nabla}\) is compatible with \(\nabla\). The proof of the statement $(4)$ is straightforward. This completes the proof.
\end{proof}

\subsection{Holomorphic logarithmic connections on Torsion-Free Sheaves}

We now extend the Weil-type criterion to the broader setting of torsion-free (not necessarily locally free) sheaves on the nodal curve \(X_0\).

\subsection{The Atiyah Exact Sequence}

The \emph{first jet sheaf} \(\mathcal{J}^1(\mathcal{F})\) fits into the following short exact sequence of coherent sheaves, called the \textbf{Atiyah exact sequence}:
\begin{equation}
0 \longrightarrow \mathcal{F} \otimes_{\mathcal{O}_{X_0}} \omega_{X_0}^1 \longrightarrow \mathcal{J}^1(\mathcal{F}) \longrightarrow \mathcal{F} \longrightarrow 0.
\end{equation}

This sequence can be described explicitly as follows. As a sheaf of \(\mathbb{C}\)-vector spaces (i.e., forgetting the \(\mathcal{O}_{X_0}\)-module structure), we have
\[
\mathcal{J}^1(\mathcal{F}) \ \cong \ (\mathcal{F} \otimes_{\mathcal{O}_{X_0}} \omega_{X_0}^1) \oplus \mathcal{F}.
\]
The \(\mathcal{O}_{X_0}\)-module structure is given by the rule
\[
f \cdot (s \otimes \omega,\ t) \ = \ \bigl( fs \otimes \omega + t \otimes df,\ ft \bigr)
\]
for local sections \(f \in \mathcal{O}_{X_0}\), \(s \in \mathcal{F}\), \(\omega \in \omega_{X_0}^1\), and \(t \in \mathcal{F}\), where \(df\) denotes the image of \(f\) under the universal derivation \(d \colon \mathcal{O}_{X_0} \to \omega_{X_0}^1\).

The maps in the exact sequence are:
\begin{itemize}
    \item The inclusion \(\mathcal{F} \otimes \omega_{X_0}^1 \hookrightarrow \mathcal{J}^1(\mathcal{F})\) sends \(s \otimes \omega \mapsto (s \otimes \omega,\ 0)\),
    \item The projection \(\mathcal{J}^1(\mathcal{F}) \twoheadrightarrow \mathcal{F}\) sends \((s \otimes \omega,\ t) \mapsto t\).
\end{itemize}
From the definition, it is obvious that the sequence is exact.

\begin{definition}[Atiyah class]
    The \emph{Atiyah class} of a torsion-free coherent sheaf \(\mathcal{F}\) on \(X_0\), denoted \(\operatorname{At}(\mathcal{F})\), is the extension class 
    \[
    \operatorname{At}(\mathcal{F}) := [\mathcal{J}^1(\mathcal{F})] \ \in \ \operatorname{Ext}^1_{\mathcal{O}_{X_0}}\bigl(\mathcal{F},\ \mathcal{F} \otimes_{\mathcal{O}_{X_0}} \omega_{X_0}\bigr)
    \]
    of the Atiyah exact sequence
    \[
    0 \longrightarrow \mathcal{F} \otimes \omega_{X_0} \longrightarrow \mathcal{J}^1(\mathcal{F}) \longrightarrow \mathcal{F} \longrightarrow 0.
    \]
    In particular, \(\operatorname{At}(\mathcal{F}) = 0\) if and only if this sequence splits (equivalently, if and only if \(\mathcal{F}\) admits a holomorphic logarithmic connection).
\end{definition}

\begin{definition}[Numerical first Chern class of a torsion-free sheaf]
Let \(X_0\) be a projective Gorenstein curve (possibly nodal) and let \(\mathcal{F}\) be a torsion-free coherent sheaf on \(X_0\) of rank \(r = \operatorname{rk}(\mathcal{F})\), where the rank is the dimension of the stalk of \(\mathcal{F}\) at the generic point of \(X_0\).

The \textbf{Numerical first Chern class or the degree} of \(\mathcal{F}\) is defined by the Riemann--Roch formula:
\[
c_1(\mathcal{F}) \ := \ \chi(\mathcal{F}) - r \cdot \chi(\mathcal{O}_{X_0}) \ \in \ \mathbb{Z}.
\]
\end{definition}

\begin{lemma}[Trace of the Atiyah class]
Let \(X_0\) be a Gorenstein curve and \(\mathcal{F}\) a torsion-free coherent sheaf on \(X_0\). Then
\[
\operatorname{tr} \bigl( \operatorname{At}(\mathcal{F}) \bigr) \ = \ c_1(\mathcal{F}) \ \in \ H^1(X_0, \omega_{X_0}),
\]
where \(\operatorname{tr} : \operatorname{Ext}^1(\mathcal{F}, \mathcal{F} \otimes \omega_{X_0}) \to H^1(X_0, \omega_{X_0})\) is the trace map induced by the natural trace morphism \(\operatorname{tr} : \End(\mathcal{F}) \to \mathcal{O}_{X_0}\).
\end{lemma}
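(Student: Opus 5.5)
The plan is to reduce to the locally free case, where the previous lemma $\operatorname{tr}(\operatorname{At}(E)) = c_1(E)$ is already known, by passing to the normalization. The first step is to make the trace map well defined for a torsion-free, non-locally-free $\mathcal{F}$. Near the node such an $\mathcal{F}$ is locally $\mathcal{O}^{a}\oplus \mathfrak{m}^{b}$, so I will not use a trace on $\mathcal{E}nd(\mathcal{F})$ directly. Instead I will use the fact that on a Gorenstein curve the composite
\[
\operatorname{Ext}^1(\mathcal{F},\mathcal{F}\otimes\omega_{X_0}) \longrightarrow H^0(\mathcal{E}nd\,\mathcal{F})^\vee \xrightarrow{\ \mathrm{ev}_{\mathrm{Id}}\ } \mathbb{C}
\]
is given by Serre duality. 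I therefore define $\operatorname{tr}(\operatorname{At}(\mathcal{F}))$ as the Serre-duality pairing $\langle \operatorname{Id}_{\mathcal{F}}, \operatorname{At}(\mathcal{F})\rangle \in H^1(X_0,\omega_{X_0})\cong\mathbb{C}$. This is the pairing used in the proof of Theorem \ref{Weil}, and on the locally free locus it agrees with the pointwise trace.

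The second step is to compute this number through a locally free resolution, or more concretely through the triple of Proposition~\ref{Trip} and the exact sequence
\[
0\to\mathcal{F}\to\pi_*\widetilde{E}\to \tau\to 0,
\]
where $\tau$ is a skyscraper at the node of length $\ell=\dim(\widetilde{E}_{x^+}\oplus\widetilde{E}_{x^-})/\Gamma_A$. Two facts are needed. The first is additivity of the trace of the Atiyah class in short exact sequences, which I would prove by taking a filtered jet sequence and a Čech representative whose cocycle is block upper triangular, exactly as in the nilpotent argument of Theorem~\ref{Weil}. The second is the value of $\operatorname{tr}\operatorname{At}(\pi_*\widetilde{E})$ relative to $\deg\widetilde{E}$, computed on a cover whose overlaps avoid the node, as in the remark following the previous lemma. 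For a skyscraper sheaf the trace contribution is zero, since its Atiyah class is supported at a point and pairs trivially with $H^1(\omega_{X_0})$ after taking $\chi$. Additivity then gives $\operatorname{tr}\operatorname{At}(\mathcal{F}) = \operatorname{tr}\operatorname{At}(\pi_*\widetilde{E})$ up to the correction coming from $\tau$.

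The third step compares numerics. Taking Euler characteristics in the sequence gives
\[
\chi(\mathcal{F}) = \chi(\widetilde{E}) - \ell = \deg\widetilde{E} + r(1-g(\widetilde{X_0})) - \ell,
\]
with $g(\widetilde{X_0})=g-1$. The trace side must reproduce $\chi(\mathcal{F}) - r\chi(\mathcal{O}_{X_0})$ via Riemann–Roch. I would check the answer on the two building blocks, $\mathcal{O}_{X_0}$ (where $\ell=r$) and $\pi_*\mathcal{O}_{\widetilde{X_0}}$ (where $\ell=0$ and $\deg = 1$). These generate the local types, and the formula is additive on both sides.

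The main obstacle is the second step: making the trace and the additivity rigorous for non-locally-free sheaves, where $\mathcal{E}nd(\mathcal{F})$ has no naive trace and Čech cocycles do not live in $\mathcal{E}nd\otimes\omega$. My first attempt will be to replace $\mathcal{F}$ by a two-term locally free resolution $0\to P_1\to P_0\to\mathcal{F}\to 0$, which exists on a projective curve. I will then define the trace as $\operatorname{tr}\operatorname{At}(P_0)-\operatorname{tr}\operatorname{At}(P_1)$, check independence of the resolution by comparing two resolutions through a common refinement, and conclude using the previous lemma together with $c_1(\mathcal{F}) = c_1(P_0)-c_1(P_1)$, which follows from additivity of $\chi$.
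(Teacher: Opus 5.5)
There is a genuine gap. It sits exactly where you placed the main obstacle, and it cannot be closed.

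\textbf{The proposed remedy does not exist at a node.} You propose a two-term resolution $0\to P_1\to P_0\to\mathcal{F}\to 0$ by locally free sheaves. Such resolutions exist on a smooth curve, but not at a node. The ring $\mathcal{O}_{X_0,x}$ is one-dimensional and not regular, and a torsion-free stalk has depth one. By the Auslander--Buchsbaum formula, a depth-one module of finite projective dimension over this ring is free. So the kernel of any surjection $P_0\twoheadrightarrow\mathcal{F}$ is locally free only when $\mathcal{F}$ already is, and that case is the previous lemma. For $\pi_*\mathcal{O}_{\widetilde{X_0}}$, for instance, the local resolution is infinite and periodic.

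The same non-perfectness breaks step~1. The identification $\operatorname{Ext}^1(\mathcal{F},\mathcal{F}\otimes\omega_{X_0})\cong H^0(\mathcal{E}nd\,\mathcal{F})^\vee$ uses that $\mathcal{F}$ is locally free. In general $\operatorname{Ext}^1$ also contains the local term $H^0(\mathcal{E}xt^1(\mathcal{F},\mathcal{F})\otimes\omega_{X_0})$, which no Čech cocycle in $\mathcal{E}nd\,\mathcal{F}\otimes\omega_{X_0}$ detects. For $\mathcal{F}=\pi_*\mathcal{O}_{\widetilde{X_0}}$ the two sides have dimensions $2$ and $1$. So neither the trace nor its additivity in step~2 is available by the routes you propose.

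\textbf{The statement itself fails.} It is false for torsion-free sheaves that are not locally free, and the building block you planned to test in step~3 is a counterexample. Let $\mathcal{F}=\pi_*\mathcal{O}_{\widetilde{X_0}}$, so $c_1(\mathcal{F})=1$.
Since $\pi^*\omega_{X_0}\cong\Omega^1_{\widetilde{X_0}}(x^++x^-)$, the projection formula gives $\mathcal{F}\otimes\omega_{X_0}\cong\pi_*\Omega^1_{\widetilde{X_0}}(x^++x^-)$.
Then $\pi_*d$ is a holomorphic logarithmic connection on $\mathcal{F}$: for $f\in\mathcal{O}_{X_0}$, the form $df\in\omega_{X_0}$ is just $d(f\circ\pi)$, so the Leibniz rule holds. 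Hence $\operatorname{At}(\mathcal{F})=0$, and any linear trace of it is $0\neq c_1(\mathcal{F})$.

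The same works for $\pi_*L$ with $L$ any line bundle on $\widetilde{X_0}$. Such an $L$ has a logarithmic connection with poles at $x^{\pm}$, because $H^1(\widetilde{X_0},\Omega^1_{\widetilde{X_0}}(x^++x^-))=0$. So every non-locally-free rank-one torsion-free sheaf carries such a connection, whatever its degree.

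In the language of Proposition~\ref{Trip}, the residue theorem on $\widetilde{X_0}$ gives $\deg\widetilde{E}=-\operatorname{tr}\operatorname{res}_{x^+}-\operatorname{tr}\operatorname{res}_{x^-}$. The compatibility at the node forces these traces to cancel when the gluing map $A$ is invertible, i.e.\ when $\mathcal{F}$ is locally free, but not when $A$ is singular.

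Correspondingly, consider $0\to\mathcal{O}_{X_0}\to\pi_*\mathcal{O}_{\widetilde{X_0}}\to\mathbb{C}_x\to 0$. Both $\mathcal{O}_{X_0}$ and $\pi_*\mathcal{O}_{\widetilde{X_0}}$ carry connections, while $\chi$ jumps by one. So the nodal skyscraper $\tau$ in your step~2 cannot be accounted for consistently on both sides.

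The paper's degeneration argument does not escape this either. For instance, its claim $\operatorname{Ext}^2_{X_0}(\mathcal{F},\mathcal{F})=0$ fails here, since $\mathcal{E}xt^2(\mathcal{F},\mathcal{F})$ is a nonzero skyscraper at the node. What can actually be proved, by your Čech argument or the previous lemma, is the locally free case.
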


\begin{proof}
We follow a deformation-theoretic argument. Let \(R\) be a discrete valuation ring with fraction field \(K\) and residue field $\mathbb C$. Then from \cite[Theorem~B.2 and Corollary~B.3, Appendix~B]{M1}, it follows that there exists a flat projective morphism \(\pi \colon \mathcal{X} \to \Spec R\) with smooth generic fiber \(\mathcal{X}_K\) and special fiber \(\mathcal{X}_k \cong X_0\).

Since \(\dim X_0 = 1\), the obstruction to deforming the coherent sheaf \(\mathcal{F}\) lies in \(Ext^2_{X_0}(\mathcal{F},\mathcal{F})\), which vanishes. Hence \(\mathcal{F}\) extends to a coherent sheaf \(\mathcal{G}\) on \(\mathcal{X}\), flat over \(R\), such that \(\mathcal{G}_K\) is a vector bundle on the smooth curve \(\mathcal{X}_K\) and \(\mathcal{G}_k \cong \mathcal{F}\).

Let \(\omega_{\mathcal{X}/R}\) be the relative dualizing sheaf. The \emph{relative Atiyah class}
\[
At(\mathcal{G}/R) \in H^1\bigl(\mathcal{X}, \End(\mathcal{G}) \otimes \omega_{\mathcal{X}/R}\bigr)
\]
is well-defined. Its trace is a global section
\[
\sigma := \tr\bigl(At(\mathcal{G}/R)\bigr) \in H^0\bigl(\Spec R,\, R^1\pi_*\omega_{\mathcal{X}/R}\bigr).
\]
By relative Serre duality, the sheaf \(R^1\pi_*\omega_{\mathcal{X}/R}\) is free of rank one over \(R\).

On the generic fiber, Atiyah's theorem gives
\[
\tr\bigl(At(\mathcal{G}_K)\bigr) = c_1(\mathcal{G}_K) \in \mathbb{Z}.
\]
Thus \(\sigma\) is the constant section with this integer value. Because flatness of \(\mathcal{G}\) over \(R\) preserves the Euler characteristic (and hence the numerical first Chern class via Riemann--Roch), the specialization of \(\sigma\) to the special fiber yields
\[
\tr\bigl(At(\mathcal{F})\bigr) = c_1(\mathcal{G}_k) = c_1(\mathcal{F})
\]
in \(H^1(X_0, \omega_{X_0})\), as required.
\end{proof}

\begin{remark}[Alternative argument using resolution of the surface]
Let \(\hat{\pi} \colon \hat{\mathcal{X}} \to \mathcal{X}\) be the minimal resolution of singularities of the surface \(\mathcal{X}\). Define
\[
\mathcal{E} := \hat{\pi}^*\mathcal{G} \Big/ tors(\hat{\pi}^*\mathcal{G}).
\]
Then \(\mathcal{E}\) is a vector bundle on the smooth surface \(\hat{\mathcal{X}}\).

The Atiyah class satisfies the compatibility
\[
\hat{\pi}_* \operatorname{At}(\mathcal{E}) = \operatorname{At}(\mathcal{G}),
\]
where the right-hand side is defined using the dualizing sheaf \(\omega_{\mathcal{X}/R}\). Equivalently, after pullback and quotienting by torsion, the Atiyah extension of \(\mathcal{G}\) on \(\mathcal{X}\) corresponds to that of \(\mathcal{E}\) on \(\hat{\mathcal{X}}\).

On the smooth surface \(\hat{\mathcal{X}}\), \(\mathcal{E}\) is locally free, so we can use the classical Čech description. If \(\{\phi_{ij}\}\) are transition matrices of \(\mathcal{E}\), then
\[
\operatorname{tr}\bigl(\operatorname{At}(\mathcal{E})\bigr) = \bigl\{ d\log(\det \phi_{ij}) \bigr\},
\]
which is the standard Čech cocycle representative of \(c_1(\mathcal{E})\) in \(H^1(\hat{\mathcal{X}}, \omega_{\hat{\mathcal{X}}})\). Hence
\[
\operatorname{tr}\bigl(\operatorname{At}(\mathcal{E})\bigr) = c_1(\mathcal{E}).
\]

Since \(\hat{\pi}\) is a resolution and \(\mathcal{G}\) is flat over \(R\), the numerical first Chern class is preserved:
\[
c_1(\mathcal{E}) = c_1(\mathcal{G})
\]
(numerically, and thus also after tracing). Restricting to the special fiber and using the compatibility of the Atiyah classes, we conclude
\[
\operatorname{tr}\bigl(\operatorname{At}(\mathcal{F})\bigr) = c_1(\mathcal{F}) \quad \text{in } H^1(X_0, \omega_{X_0}).
\]
\end{remark}

\begin{lemma}[Correspondence with Connections]
Let \(X_0\) be a Gorenstein curve and \(\mathcal{F}\) be a coherent torsion-free sheaf on \(X_0\). Then there is a natural bijection between:
\begin{itemize}
    \item The set of holomorphic logarithmic connections on \(\mathcal{F}\) with values in the dualizing sheaf, i.e., \(\mathbb{C}\)-linear morphisms
          \[
          \nabla \colon \mathcal{F} \to \mathcal{F} \otimes \omega_{X_0}
          \]
          satisfying the Leibniz rule
          \[
          \nabla(fs) = f \nabla(s) + s \otimes df
          \]
          for all local sections \(f \in \mathcal{O}_{X_0}\), \(s \in \mathcal{F}\), where \(df\) is given by the universal derivation \(\mathcal{O}_{X_0} \to \omega_{X_0}\);
    \item The set of splittings of the dualizing Atiyah sequence
          \[
          0 \longrightarrow \mathcal{F} \otimes \omega_{X_0} \longrightarrow \mathcal{J}^1(\mathcal{F}, \omega_{X_0}) \longrightarrow \mathcal{F} \longrightarrow 0.
          \]
\end{itemize}
\end{lemma}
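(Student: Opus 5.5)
The plan is the standard one for connections as splittings: a $\mathbb{C}$-linear map $\nabla$ satisfying the Leibniz rule should correspond to an $\mathcal{O}_{X_0}$-linear section $s\colon \mathcal{F}\to\mathcal{J}^1(\mathcal{F})$ of the projection $p\colon \mathcal{J}^1(\mathcal{F})\to\mathcal{F}$. Using the description of $\mathcal{J}^1(\mathcal{F})$ as $(\mathcal{F}\otimes\omega_{X_0})\oplus\mathcal{F}$ with the twisted module structure $f\cdot(\sigma,t)=(f\sigma+t\otimes df,\ ft)$, every set-theoretic (indeed $\mathbb{C}$-linear) section of $p$ has the form
\[
s_\nabla(t)=(\nabla(t),\,t)
\]
for a unique $\mathbb{C}$-linear map $\nabla\colon\mathcal{F}\to\mathcal{F}\otimes\omega_{X_0}$. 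I will therefore define the bijection by $\nabla\mapsto s_\nabla$, with inverse $s\mapsto \mathrm{pr}_1\circ s$, and the only content is to check that $\mathcal{O}_{X_0}$-linearity of $s_\nabla$ is equivalent to the Leibniz rule.

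The key computation, done on local sections $f\in\mathcal{O}_{X_0}$, $t\in\mathcal{F}$, is
\[
f\cdot s_\nabla(t)=\bigl(f\nabla(t)+t\otimes df,\ ft\bigr),\qquad s_\nabla(ft)=\bigl(\nabla(ft),\ ft\bigr).
\]
These agree for all $f,t$ exactly when $\nabla(ft)=f\nabla(t)+t\otimes df$. Additivity of $s_\nabla$ is additivity of $\nabla$, and $p\circ s_\nabla=\mathrm{id}$ holds by construction. Hence $\mathcal{O}_{X_0}$-linear splittings and holomorphic logarithmic connections correspond bijectively. Both assignments are defined locally and glue, since they are compatible with restriction to opens. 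Naturality, meaning compatibility with isomorphisms $\mathcal{F}\cong\mathcal{F}'$ and the induced isomorphisms of jet sheaves, is immediate from the formula. As a corollary, two splittings differ by an element of $\Hom(\mathcal{F},\mathcal{F}\otimes\omega_{X_0})$, matching the fact that connections form a torsor under that group.

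The step I expect to need the most care is that the displayed module structure genuinely makes $\mathcal{J}^1(\mathcal{F})$ an $\mathcal{O}_{X_0}$-module on the singular curve. This requires a $\mathbb{C}$-derivation $d\colon\mathcal{O}_{X_0}\to\omega_{X_0}$. I will obtain it as the composite $\mathcal{O}_{X_0}\to\Omega^1_{X_0}\to\omega_{X_0}$, the second map being the canonical one; concretely, for $f$ near the node, $\pi^*df$ is a regular form on $\widetilde{X_0}$ and so has zero residues, hence lies in $\omega_{X_0}$. Associativity, $(fg)\cdot x=f\cdot(g\cdot x)$, then reduces to $d(fg)=f\,dg+g\,df$. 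Once this is in place, the bijection is purely formal and does not use torsion-freeness or the Gorenstein property beyond $\omega_{X_0}$ being a well-defined coherent sheaf. Those hypotheses matter only in later uses, where $\operatorname{At}(\mathcal{F})$ is paired via Serre duality.
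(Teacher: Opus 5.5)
Your proposal is correct. It is the routine verification the paper omits, since the paper's proof says only ``Straightforward and we leave it for the reader'': in the paper's explicit model of $\mathcal{J}^1(\mathcal{F})$ every section of the projection has the form $t\mapsto(\nabla t,t)$, and $\mathcal{O}_{X_0}$-linearity of that section is exactly the Leibniz rule. You also usefully fill in a detail the paper glosses over when it calls the derivation ``universal'': you realize $d\colon\mathcal{O}_{X_0}\to\omega_{X_0}$ as $f\mapsto d(\pi^*f)$, which is a regular form with zero residues and therefore lies in $\omega_{X_0}$.
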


\begin{proof}
Straightforward and we leave it for the reader.
\end{proof}

\begin{theorem}[Weil-type theorem for torsion-free sheaves]\label{prop:weil-torsionfree}
    Let \(\mathcal{F}\) be an indecomposable torsion-free coherent sheaf of rank \(n\) and degree \(0\) on the irreducible nodal curve \(X_0\). Then \(\mathcal{F}\) admits a holomorphic logarithmic connection
    \[
    \nabla \colon \mathcal{F} \to \mathcal{F} \otimes \omega_{X_0}
    \]
    if and only if \(\deg \mathcal{F} = 0\).
\end{theorem}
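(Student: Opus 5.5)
The plan is to mirror the proof of Theorem~\ref{Weil}, replacing the Čech description of the Atiyah class with the extension class $\operatorname{At}(\mathcal{F})\in \operatorname{Ext}^1(\mathcal{F},\mathcal{F}\otimes\omega_{X_0})$. By the Lemma on the correspondence with connections, $\mathcal{F}$ admits a holomorphic logarithmic connection exactly when the dualizing Atiyah sequence splits, that is, when $\operatorname{At}(\mathcal{F})=0$. So it suffices to show that $\operatorname{At}(\mathcal{F})=0$ if and only if $\deg\mathcal{F}=0$.

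\textbf{Serre duality.} On the Gorenstein curve $X_0$, Serre duality gives a perfect pairing
\[
\operatorname{Hom}(\mathcal{F},\mathcal{F})\times \operatorname{Ext}^1(\mathcal{F},\mathcal{F}\otimes\omega_{X_0})\longrightarrow H^1(X_0,\omega_{X_0})\cong\mathbb{C},\qquad \langle\phi,\eta\rangle=\operatorname{tr}(\phi\circ\eta).
\]
Hence $\operatorname{At}(\mathcal{F})=0$ if and only if $\operatorname{tr}(\phi\circ\operatorname{At}(\mathcal{F}))=0$ for all $\phi\in\operatorname{End}(\mathcal{F})$. Since $\mathcal{F}$ is indecomposable and $\operatorname{End}(\mathcal{F})$ is finite dimensional, the Fitting lemma applies as before, and every endomorphism has the form $\phi=\lambda\,\mathrm{Id}+N$ with $N$ nilpotent.

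\textbf{Scalar part.} This is handled by the Lemma on the trace of the Atiyah class:
\[
\langle\mathrm{Id},\operatorname{At}(\mathcal{F})\rangle=\operatorname{tr}\operatorname{At}(\mathcal{F})=c_1(\mathcal{F})=\deg\mathcal{F}.
\]
This already gives one direction: a connection forces $\deg\mathcal{F}=0$. It also reduces the converse to showing that $\langle N,\operatorname{At}(\mathcal{F})\rangle=0$ for every nilpotent $N$.

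\textbf{Nilpotent part (main obstacle).} Here the flag argument must be redone for a non-locally-free sheaf, and this is the step I expect to be hardest. I would use the filtration
\[
0\subset\ker N\subset\ker N^2\subset\cdots\subset\ker N^k=\mathcal{F}.
\]
Its pieces are saturated subsheaves, so the pieces and their successive quotients are torsion-free. The key property is functoriality of the Atiyah extension: $\mathcal{J}^1$ is functorial in $\mathcal{F}$, so for any morphism $u\colon\mathcal{F}\to\mathcal{F}'$ we have $u\circ\operatorname{At}(\mathcal{F})=\operatorname{At}(\mathcal{F}')\circ u$ in $\operatorname{Ext}^1(\mathcal{F},\mathcal{F}'\otimes\omega_{X_0})$. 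This yields the cyclic identity
\[
\operatorname{tr}(N\circ\operatorname{At}(\mathcal{F}))=\operatorname{tr}(\operatorname{At}(\mathcal{F})\circ N).
\]
I would first attempt a cleaner alternative to the flag argument: write $N=N\cdot N^{k-1}$-type factorizations and use trace cyclicity to show the pairing is zero. Concretely, factor $N=\iota\circ p$ through $\operatorname{im}N$ and move the Atiyah class across using functoriality. This gives
\[
\operatorname{tr}(N\operatorname{At}(\mathcal{F}))=\operatorname{tr}\bigl(\operatorname{At}(\operatorname{im}N)\circ(N|_{\operatorname{im}N})\bigr).
\]
One can then induct on the nilpotency index, since $N|_{\operatorname{im}N}$ has strictly smaller index and eventually becomes $0$. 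The delicate point is that trace cyclicity for Ext classes between non-locally-free sheaves requires working in the derived category with perfect complexes. On a nodal curve torsion-free sheaves are not perfect, so I would pass to a locally free resolution on a neighbourhood of the node, or alternatively use the degeneration/resolution setup from the proof of the trace lemma to lift $N$ and the filtration.

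\textbf{Fallback.} If this fails, I would instead pass to the triple $(\widetilde{E},A)$ of Proposition~\ref{Trip}. Endomorphisms of $\mathcal{F}$ correspond to endomorphisms of $\widetilde{E}$ compatible with $A$, so $N$ lifts to a nilpotent endomorphism of $\widetilde{E}$ preserving $\Gamma_A$. On $\widetilde{E}$ one can choose frames adapted to a complete $N$-flag compatible with the gluing, so that transition data and the gluing map $A$ are triangular, and the trace of $N\cdot\operatorname{At}$ vanishes as in Theorem~\ref{Weil}.

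Combining the scalar and nilpotent parts, the pairing of $\operatorname{At}(\mathcal{F})$ with every endomorphism is $n\lambda\deg\mathcal{F}$. It therefore vanishes for all $\phi$ exactly when $\deg\mathcal{F}=0$, which completes the plan.
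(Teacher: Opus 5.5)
Your plan has the same architecture as the paper's proof: the Atiyah class in $\operatorname{Ext}^1$, a pairing against $\operatorname{End}(\mathcal{F})$, the Fitting decomposition, the trace lemma for the scalar part, and a flag argument for $N$. It breaks at the same place, namely the claim that the pairing $\operatorname{Hom}(\mathcal{F},\mathcal{F})\times\operatorname{Ext}^1(\mathcal{F},\mathcal{F}\otimes\omega_{X_0})\to H^1(X_0,\omega_{X_0})$ is perfect. The perfectness issue you raise for trace cyclicity is already fatal one step earlier. Serre duality in the form $\operatorname{Ext}^1(\mathcal{F},\mathcal{G}\otimes\omega_{X_0})^{*}\cong\operatorname{Hom}(\mathcal{G},\mathcal{F})$ needs one of the two sheaves to be perfect, and this fails as soon as $\mathcal{F}$ is not locally free at the node.

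For a concrete example, take $\mathcal{F}=\pi_*L$ with $L$ a line bundle on $\widetilde{X_0}$. Then $\End(\mathcal{F})=\pi_*\mathcal{O}_{\widetilde{X_0}}$, so $\operatorname{Hom}(\mathcal{F},\mathcal{F})=\mathbb{C}$. On the other side, $\mathcal{H}om(\mathcal{F},\mathcal{F}\otimes\omega_{X_0})=\pi_*\Omega^1_{\widetilde{X_0}}(x^++x^-)$ has vanishing $H^1$. Formally locally, $\mathcal{F}\cong\mathbb{C}[[x]]\oplus\mathbb{C}[[y]]$ over $\mathbb{C}[[x,y]]/(xy)$, and this module has self-$\operatorname{Ext}^1$ equal to $\mathbb{C}^2$. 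Hence $\operatorname{Ext}^1(\mathcal{F},\mathcal{F}\otimes\omega_{X_0})\cong\mathbb{C}^2$, so vanishing of $\langle\phi,\operatorname{At}(\mathcal{F})\rangle$ for all $\phi$ does not give $\operatorname{At}(\mathcal{F})=0$.

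The scalar step fails as well. There is no trace $\End(\pi_*L)\to\mathcal{O}_{X_0}$, since the idempotent of one branch has ``trace'' outside $\mathcal{O}_{X_0}$. Moreover, your claim ``a connection forces $\deg\mathcal{F}=0$'' is false. Any logarithmic connection on $L$ with poles at $x^\pm$ exists, because $H^1(\widetilde{X_0},\Omega^1_{\widetilde{X_0}}(x^++x^-))=0$, and it pushes forward to a connection $\pi_*L\to\pi_*L\otimes\omega_{X_0}$; yet $\deg\pi_*L=\deg L+1$ is arbitrary. Under the stated hypothesis $\deg\mathcal{F}=0$ that direction is vacuous, but the existence direction is then left unproved. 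The paper's argument uses the same pairing and the same trace lemma, so it does not supply the missing justification either.

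What is needed is to run the duality argument on the normalization, roughly where your fallback points, but with the obstruction space computed there rather than on $X_0$.
By Proposition~\ref{Trip}, connections on $\mathcal{F}$ are the logarithmic connections $\widetilde{\nabla}$ on $\widetilde{E}$ with poles at $x^\pm$ whose residues $R^\pm$ satisfy $AR^++R^-A=0$. On the smooth curve $\widetilde{X_0}$, a pair $(R^+,R^-)$ is the residue pair of some logarithmic connection iff $\operatorname{tr}(\phi(x^+)R^+)+\operatorname{tr}(\phi(x^-)R^-)=-\langle\operatorname{At}(\widetilde{E}),\phi\rangle$ for all $\phi\in H^0(\End\widetilde{E})$; for $\phi=\mathrm{Id}$ this is the residue theorem. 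Under the trace pairing, the annihilator of $\{(R^+,R^-):AR^++R^-A=0\}$ is $\{(BA,AB):B\in\operatorname{Hom}(\widetilde{E}_{x^-},\widetilde{E}_{x^+})\}$. So a compatible connection exists iff $\langle\operatorname{At}(\widetilde{E}),\phi\rangle=0$ for every $\phi$ with $(\phi(x^+),\phi(x^-))=(BA,AB)$ for some $B$.

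Such $\phi$ satisfy $A\phi(x^+)=\phi(x^-)A$, so they preserve $\mathcal{F}$. Fitting in $\operatorname{End}(\mathcal{F})$ then gives $\phi=\lambda\,\mathrm{Id}+N$ with $N$ nilpotent on $\widetilde{E}$. Atiyah's flag argument is legitimate on the smooth curve, where saturated subsheaves are subbundles, and it gives $\langle\operatorname{At}(\widetilde{E}),\phi\rangle=\lambda\deg\widetilde{E}=\lambda\deg\mathcal{F}=0$. This also explains the counterexample above: if $A$ is not invertible then $BA$ is singular, which forces $\lambda=0$, so no degree condition appears.
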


\begin{proof}
The obstruction to the existence of a holomorphic logarithmic connection on the torsion-free sheaf \(\mathcal{F}\) is the Atiyah class
\[
\operatorname{At}(\mathcal{F}) \in \operatorname{Ext}^1(\mathcal{F}, \mathcal{F} \otimes \omega_{X_0}).
\]
It suffices to show that \(\operatorname{At}(\mathcal{F}) = 0\) if and only if \(\deg \mathcal{F} = 0\).

The trace morphism \(\operatorname{tr} \colon \End(\mathcal{F}) \to \mathcal{O}_{X_0}\) induces a natural pairing
\[
H^0(X_0, \End(\mathcal{F})) \times \operatorname{Ext}^1(\mathcal{F}, \mathcal{F} \otimes \omega_{X_0}) \xrightarrow{\cup} \operatorname{Ext}^1(\mathcal{F}, \mathcal{F} \otimes \omega_{X_0}) \xrightarrow{Tr}  H^1(X_0, \omega_{X_0}) \cong \mathbb{C}.
\]

Since \(\mathcal{F}\) is indecomposable and torsion-free, the Fitting lemma \cite[Proposition~7.4, p.~441]{Lang} implies that every global endomorphism \(\phi \in H^0(X_0, \End(\mathcal{F}))\) can be written as \(\phi = \lambda \cdot \id + N\), where \(\lambda \in \mathbb{C}\) and \(N\) is nilpotent (note that \(\End(\mathcal{F})\) is finite-dimensional).

We first consider the nilpotent part \(N\). Because \(N\) is nilpotent, there exists a complete flag of saturated torsion-free subsheaves
\[
0 = \mathcal{F}_0 \subset \mathcal{F}_1 \subset \cdots \subset \mathcal{F}_r = \mathcal{F},
\]
where each \(\mathcal{F}_i\) is a saturated torsion-free subsheaf of rank \(i\) (i.e., \(\mathcal{F}/\mathcal{F}_i\) is torsion-free) and \(N(\mathcal{F}_i) \subset \mathcal{F}_{i-1}\) for all \(i\) (see Remark \ref{flag2}). 

First of all, from the remark \ref{Nilp}, it follows that
\[
\langle N, \operatorname{At}(\mathcal{F}) \rangle = 0.
\]

On the other hand, for the scalar part we have
\[
\langle \lambda \cdot \id, \operatorname{At}(\mathcal{F}) \rangle = \lambda \cdot \operatorname{tr}(\operatorname{At}(\mathcal{F})) = \lambda \cdot c_1(\mathcal{F}),
\]
where \(c_1(\mathcal{F})\) denotes the first Chern class of \(\mathcal{F}\).

Hence the pairing \(\langle \phi, \operatorname{At}(\mathcal{F}) \rangle = 0\) for every \(\phi \in H^0(X_0, \End(\mathcal{F}))\) if and only if \(c_1(\mathcal{F}) = 0\), which is equivalent to \(\deg \mathcal{F} = 0\). This completes the proof.
\end{proof}

\begin{remark}\label{flag2}
Such a flag can be constructed as follows. At the generic point \(\eta\) of \(X_0\), the stalk \(\mathcal{F}_\eta\) is a vector space of dimension \(r\) and \(N_\eta\) is nilpotent, so there exists a complete flag
\[
0 = V_0 \subset V_1 \subset \cdots \subset V_r = \mathcal{F}_\eta
\]
with \(\dim V_i = i\) and \(N(V_i) \subset V_{i-1}\).

We extend this flag inductively to saturated subsheaves on \(X_0\). Suppose \(\mathcal{F}_{i-1} \subset \mathcal{F}\) is already constructed, saturated of rank \(i-1\), with generic fiber \(V_{i-1}\), and satisfying \(N(\mathcal{F}_{i-1}) \subset \mathcal{F}_{i-2}\). Let \(\mathcal{Q} = \mathcal{F}/\mathcal{F}_{i-1}\), a torsion-free sheaf of rank \(r-i+1\). The image \(W = V_i/V_{i-1} \subset \mathcal{Q}_\eta\) is one-dimensional and satisfies \(N(W) = 0\) in \(\mathcal{Q}_\eta\).

Since \(\mathcal{Q}\) is torsion-free on a curve, one can always find a rank-$1$ saturated subsheaf \(\mathcal{L} \subset \mathcal{Q}\) whose generic fiber is exactly \(W\) (take a generic section spanning \(W\) and saturate). Let \(\mathcal{F}_i\) be the preimage of \(\mathcal{L}\) in \(\mathcal{F}\). Then \(\mathcal{F}_i\) is saturated of rank \(i\), \(\mathcal{F}/\mathcal{F}_i\) is torsion-free, and \(N(\mathcal{F}_i) \subset \mathcal{F}_{i-1}\).
\end{remark}

\begin{remark}\label{Nilp}
Since \( N \) is a global nilpotent endomorphism, the class \( \operatorname{Trace}(N \cup \operatorname{At}(F)) \) lies in \( H^1(X_0, \omega_{X_0}) \). Therefore it can be expressed as a cohomology class \([\{\sigma_{ij}\}]\) of a Čech 1-cocycle with values in \( \omega_{X_0} \). We want to compute the actual local sections \( \sigma_{ij} \) (not just the cohomology class), for a particularly chosen representative, and we want to do this outside the node.

Let \( U = X_0 \setminus \{p\} \) be the smooth locus, and let \( F|_U \) be the vector bundle obtained by restricting \( F \). On \( U \), since \( F \) is locally free, \( \operatorname{At}(F|_U) \) is the usual Atiyah class of a vector bundle. Choose an open cover \( \{U_i\} \) of \( U \) with local frames \( e_i \) for \( F|_{U_i} \). Let
\[
g_{ij} \in \Gamma(U_{ij}, \mathrm{GL}(r,\mathcal{O}))
\]
be the transition matrices:
\[
e_i = g_{ij} \, e_j \quad \text{on } U_{ij}.
\]
Then a Čech 1-cocycle representative for \( \operatorname{At}(F|_U) \) is
\[
\alpha_{ij} = g_{ij}^{-1} \, dg_{ij} \quad \in \ \Gamma(U_{ij}, \operatorname{End}(F) \otimes \Omega^1_U).
\]

Since \( N \) is nilpotent, it admits a complete flag
\[
0 = F_0 \subset F_1 \subset \cdots \subset F_r = F
\]
such that \( N(F_k) \subset F_k \) for all \( k \).

Because the frames on \( U_i \) and \( U_j \) both respect the same global flag, the transition matrix \( g_{ij} \) (defined by \( e_i = g_{ij} e_j \)) must preserve the flag. Hence \( g_{ij} \) is upper triangular.

The usual Atiyah cocycle is
\[
\alpha_{ij} = g_{ij}^{-1} \, dg_{ij}.
\]
Since \( g_{ij} \) is upper triangular, \( g_{ij}^{-1} \) is also upper triangular, and therefore \( \alpha_{ij} = g_{ij}^{-1} dg_{ij} \) is upper triangular.

Now multiply on the left by \( N_i \) (strictly upper triangular):
\[
N_i \cdot \alpha_{ij} \quad = \quad \text{strictly upper triangular matrix},
\]
because the product of a strictly upper triangular matrix and an upper triangular matrix is strictly upper triangular.

Therefore the cocycle
\[
(N \cup \operatorname{At}(F))_{ij} := N_i \cdot (g_{ij}^{-1} dg_{ij})
\]
takes values in strictly upper triangular matrices on every overlap \( U_{ij} \).

This representative shows that, with respect to a flag-adapted cover on the smooth locus, the cocycle for \( N \cup \operatorname{At}(F) \) (and hence its trace) is visibly strictly upper triangular, and in particular its trace \( \sigma_{ij} \) vanishes identically on \( U_{ij} \). This shows that the class $Trace(N\cup At(F))=0$. 
\end{remark}

\begin{theorem}
    Let $\mathcal F$ be an indecomposable torsion-free sheaf of rank $n$ and degree $0$. From Proposition \ref{Trip}, it follows that there exists a unique triple $(\tilde{E}, A)$ on the normalisation $\widetilde{X_0}$ which induces the torsionfree sheaf $\mathcal F$ on the nodal curve $X_0$. 
    
    The torsionfree sheaf $\mathcal F$ admits a holomorphic logarithmic connection $\nabla: \mathcal F\to \mathcal F\otimes \omega_{X_0}$ such that the induced logarithmic connection triple $(\widetilde{E}, A, \widetilde{\nabla}: \widetilde{E}\to \widetilde{E}\otimes \Omega^1_{\widetilde{X_0}}(x^++x^-))$ has residues of the form $\lambda \cdot I_{\widetilde{E}_{x^+}}$ at $x^+$ and $-\lambda \cdot I_{\widetilde{E}_{x^-}}$ at $x^-$, for some scalar $\lambda \in \mathbb{C}$.
\end{theorem}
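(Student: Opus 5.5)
We mimic the proof of Theorem \ref{Weil2}, now working with the triple $(\widetilde{E},A)$ from Proposition \ref{Trip}, so that
\[
\mathcal F=\{f\in\pi_*\widetilde E \mid A(f(x^+))=f(x^-)\}.
\]
The plan has three steps: identify the relevant sheaf of connection differences, dualize via Serre duality, and pair against endomorphisms of $\mathcal F$.

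\textbf{Step 1: the obstruction class.} Let $\mathcal G\subset \pi_*\End\widetilde E$ be the sheaf of germs $\psi$ satisfying two conditions: $\psi$ preserves $\mathcal F$ (that is, $A\circ\psi_{x^+}=\psi_{x^-}\circ A$ on $\widetilde E_{x^+}$), and the fibre pair $(\psi_{x^+},\psi_{x^-})$ is of the form $(\mu I,-\mu I)$. Replacing $\pi_*\widetilde E$ by $\mathcal F$ and $\mathcal O$-linearity by the gluing condition, we obtain an exact sequence
\[
0\to \pi_*(\End\widetilde E\otimes I_{\{x^+,x^-\}})\to \mathcal G\to \mathbb C\to 0.
\]
This is the torsion-free analogue of the bottom row of the diagram in Theorem \ref{Weil2}.

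Connections on $\mathcal F$ whose lift $\widetilde\nabla$ has residues $(\lambda I,-\lambda I)$ differ pairwise by sections of $\mathcal G\otimes\omega_{X_0}$. The $\omega$-twist converts the scalar part into residues, and the sign is built into $\omega_{X_0}$. Locally such connections exist. Near the node we take the connection $d$ on $\pi_*\widetilde E$ in a frame adapted to the gluing; its residues are $0$, and it preserves $\mathcal F$ because $A$ is constant. Hence there is a Čech obstruction class
\[
\beta_{\mathcal F}\in H^1(X_0,\mathcal G\otimes\omega_{X_0}),
\]
and a connection of the required form exists if and only if $\beta_{\mathcal F}=0$. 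We must also check that the map $\mathcal G\otimes\omega_{X_0}\to\mathcal{H}om(\mathcal F,\mathcal F\otimes\omega_{X_0})$ is compatible with Proposition \ref{Trip}(2). This ensures that the residues of the induced $\widetilde\nabla$ are those prescribed.

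\textbf{Step 2: dualize.} By Serre duality on the Gorenstein curve, $\beta_{\mathcal F}=0$ if and only if $\langle \phi,\beta_{\mathcal F}\rangle=0$ for all $\phi\in \Hom(\mathcal G,\mathcal O_{X_0})=H^0(\mathcal G^\vee)$.

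Dualizing the sequence of Step 1, as in \eqref{diag1}, embeds $H^0(\mathcal G^\vee)$ into $H^0(\widetilde X_0,\End\widetilde E)$. The image consists of those $\phi$ whose trace pairing with $(I,-I)$ vanishes, namely
\[
\operatorname{tr}\phi_{x^+}-\operatorname{tr}\phi_{x^-}=0 \quad\text{(up to the sign conventions of \eqref{eqnew})}.
\]
Moreover, $\phi$ must pair trivially with the remaining gluing-compatible part of $\mathcal G$.

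The key claim is that every such $\phi$ which actually arises comes from $H^0(\End\mathcal F)$ modulo elements pairing trivially with $\beta_{\mathcal F}$, and that the trace condition kills its scalar part. To see this, note that $H^0(\End\mathcal F)=\{\psi\in H^0(\End\widetilde E)\mid A\psi_{x^+}=\psi_{x^-}A\}$. By the Fitting lemma applied to the indecomposable $\mathcal F$, every such $\psi$ equals $\lambda I+N$ with $N$ nilpotent. The condition in \eqref{eqnew} forces $2n\lambda=0$ up to the sign convention, so $\lambda=0$.

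\textbf{Step 3: nilpotent pairing.} For nilpotent $N$ we take the saturated flag of Remark \ref{flag2}. Then the argument of Remark \ref{Nilp} shows that $\operatorname{tr}(N\cdot\beta_{\mathcal F})$ vanishes. On the smooth locus the cocycle is upper triangular, and $N$ is strictly upper triangular there. Near the node we choose the local connection in a flag-adapted way, so the correction terms are still upper triangular. Hence the pairing is $0$, giving $\beta_{\mathcal F}=0$.

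The degree-zero hypothesis enters consistently with Theorem \ref{prop:weil-torsionfree}. It guarantees that connections exist at all, and then the residue normalization costs nothing further.

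\textbf{Main obstacle.} The hard step is Step 2. The difficulty is correctly identifying $H^0(\mathcal G^\vee)$ when $A$ is not invertible, where $\mathcal F$ is not locally free and $\mathcal G^\vee$ is not simply $\End$ twisted by $\pi_*\mathcal O$. We must show that all of its elements that pair nontrivially come from endomorphisms of $\mathcal F$. We would first attempt a local computation at the node in the standard form $\mathcal F\cong \mathcal O^{a}\oplus \mathfrak m^{b}$, where $\mathfrak m$ denotes the maximal ideal of the node. This should let us verify that the extra sections lie in the annihilator of $\beta_{\mathcal F}$, or equivalently pair to zero by a residue-sum argument.
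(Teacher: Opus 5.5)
You rightly flag Step 2 as the crux, but the ``key claim'' there is false. In fact the statement itself fails, so no local analysis at the node can close the gap.

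Whatever sign convention you use, $\mathcal G$ contains $\mathcal K=\pi_*(\End\widetilde E\otimes I_{\{x^+,x^-\}})$ with cokernel of length at most one. The trace pairing identifies $\mathcal K^\vee$ with $\pi_*\End\widetilde E$. Hence $H^0(X_0,\mathcal G^\vee)$ is cut out of $H^0(\widetilde X_0,\End\widetilde E)$ by a single trace condition at $x^\pm$: dualizing forgets $A$ altogether. Its elements are endomorphisms of $\widetilde E$, not of $\mathcal F$, and they need not be scalar plus nilpotent. So Fitting's lemma for $\mathcal F$ says nothing about them. The paper's own proof asserts the same identification (``$\phi$ preserves the gluing data'') without justification, so following it does not help.

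There is also a secondary problem in Step 1. Demanding both $A\psi_{x^+}=\psi_{x^-}A$ and $(\psi_{x^+},\psi_{x^-})=(\mu I,-\mu I)$ forces $\mu A=0$, so your sequence is not exact when $A\neq 0$. Since the pullback of a local generator of $\omega_{X_0}$ has residues $1$ and $-1$, the untwisted condition should be $(\mu I,\mu I)$. With that correction, the trace condition no longer kills $\lambda$ in $\lambda\,\id+N$; the pairing with $\id$ vanishes because $\deg\mathcal F=0$.

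Here is why the statement fails. Suppose $\widetilde\nabla$ has residues $\lambda I$ at $x^+$ and $-\lambda I$ at $x^-$. Take $\theta\in H^0(\widetilde X_0,\Omega^1_{\widetilde X_0}(x^++x^-))$ with residues $1$ and $-1$ there. Then $\widetilde\nabla-\lambda\theta\,\id$ is a holomorphic connection on $\widetilde E$, so every indecomposable summand of $\widetilde E$ must have degree $0$.

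Now take the nodal cubic, with $x^+=0$ and $x^-=\infty$ on $\mathbb{P}^1$, and $\widetilde E=\mathcal O(1)\oplus\mathcal O(-1)$. Let $A$ send the fibre of $\mathcal O(1)$ at $0$ onto that of $\mathcal O(-1)$ at $\infty$, and the fibre of $\mathcal O(-1)$ at $0$ onto that of $\mathcal O(1)$ at $\infty$. The resulting $E$ is locally free of degree $0$.
\begin{itemize}
\item An endomorphism $\bigl(\begin{smallmatrix}a&b\\0&c\end{smallmatrix}\bigr)$ of $\widetilde E$, with $b\in H^0(\mathcal O(2))$, respects $A$ if and only if $a=c$ and $b(0)=b(\infty)=0$. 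So $H^0(X_0,\End E)$ is local and $E$ is indecomposable.
\item Yet $\mathcal O(1)\oplus\mathcal O(-1)$ carries no holomorphic connection. So $E$, which satisfies every hypothesis, admits no connection with scalar residues.
\end{itemize}

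On the dual side, let $p$ be the projection onto $\mathcal O(1)$. Then $p-\tfrac12\id$ is trace-free at $0$ and $\infty$, so it lies in $H^0(\mathcal G^\vee)$. It does not commute with $A$, and it pairs with $\beta_{\mathcal F}$ to a nonzero multiple of $\deg\mathcal O(1)$. This is exactly the kind of element your key claim would need to exclude, and it cannot.
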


\begin{proof}
By Proposition~\ref{Trip}, there exists a unique (up to unique isomorphism) triple \((\widetilde{E}, A, \widetilde{\nabla})\) consisting of a vector bundle \(\widetilde{E}\) on the normalization \(\widetilde{X_0}\), a linear map \(A \colon \widetilde{E}_{x^+} \to \widetilde{E}_{x^-}\), and a logarithmic connection \(\widetilde{\nabla}\) on \(\widetilde{E}\) inducing the given holomorphic logarithmic connection on \(\mathcal{F}\). It remains to show that we may choose the connection so that
\[
\operatorname{res}_{x^+} \widetilde{\nabla} = \lambda \cdot I_{\widetilde{E}_{x^+}}, \qquad
\operatorname{res}_{x^-} \widetilde{\nabla} = -\lambda \cdot I_{\widetilde{E}_{x^-}}
\]
for some \(\lambda \in \mathbb{C}\).

Let \(\mathcal{G} \subset \pi_*(\End(\widetilde{E}))\) be the subsheaf of endomorphisms compatible with the opposite residue condition. More precisely, \(\mathcal{G}\) is the kernel of the natural surjection
\[
\pi_*(\End(\widetilde{E})) \twoheadrightarrow \frac{\pi_*(\End(\widetilde{E}_{x^+}) \oplus \End(\widetilde{E}_{x^-}))}{\operatorname{im}(\phi)},
\]
where \(\phi \colon \mathbb{C} \to \End(\widetilde{E}_{x^+}) \oplus \End(\widetilde{E}_{x^-})\) is the map
\[
\lambda \cdot I \;\longmapsto\; (\lambda \cdot I_{\widetilde{E}_{x^+}},\; -\lambda \cdot I_{\widetilde{E}_{x^-}}).
\]
Equivalently, \(\mathcal{G}\) sits in the short exact sequence (see proof of theorem \ref{Weil2})
\[
0 \to \mathcal Hom(\mathcal F, \mathcal{F} \otimes I_x) \to \mathcal{G} \to \mathbb{C}\cdot(\id_{x^+},-\id_{x^-}) \to 0.
\]

The existence of a holomorphic logarithmic connection on \(\mathcal{F}\) with the desired scalar residue property is equivalent to the vanishing of the obstruction class
\[
\beta_{\mathcal{F}} \in H^1(X_0, \omega_{X_0} \otimes \mathcal{G}).
\]

By Serre duality there is a non-degenerate pairing
\[
\langle -, - \rangle \colon H^0(X_0, \mathcal{G}^\vee) \times H^1(X_0, \mathcal{G}\otimes \omega_{X_0}) \to H^1(X_0, \omega_{X_0}) \cong \mathbb{C}.
\]

We clarify the computation of \(H^0(X_0, \mathcal{G}^\vee)\).

Let \(\mathcal{G}\) sit in the short exact sequence
\[
0 \to K \to \mathcal{G} \to Q \to 0,
\]
where \(K = \mathcal Hom(\mathcal F, \mathcal{F} \otimes I_x)\) and \(Q = \mathbb{C} \cdot (\id_{x^+}, -\id_{x^-})\) (skyscraper sheaf at the node).

Dualizing gives
\[
0 \to \mathcal{G}^\vee \to K^\vee \xrightarrow{\alpha} \mathcal Ext^1(Q, \mathcal{O}_{X_0}) \simeq \mathbb{C} \to \cdots.
\]
Hence on global sections,
\[
H^0(X_0, \mathcal{G}^\vee) = \ker(\alpha),\qquad \text{where }\alpha\colon H^0(K^\vee) \to \mathbb{C}.
\]

By tensor-hom adjunction,
\[
K^\vee = (\mathcal Hom(\mathcal F, \mathcal{F} \otimes I_x))^\vee \ \cong\ (\mathcal End (\mathcal F)\otimes I_x)^\vee \ \cong\ \mathcal Hom(\End(\mathcal{F}), I_x^\vee). \footnote{The natural map 
\[
\End(\mathcal{F}) \otimes_{\mathcal{O}_{X_0}} I_x 
\longrightarrow 
\mathcal{H}om(\mathcal{F}, \mathcal{F} \otimes_{\mathcal{O}_{X_0}} I_x)
\]
is in general neither injective nor surjective. More precisely, there is a four-term exact sequence
\[
0 \to T_1 \to \End(\mathcal{F}) \otimes_{\mathcal{O}_{X_0}} I_x 
\longrightarrow 
\mathcal{H}om(\mathcal{F}, \mathcal{F} \otimes_{\mathcal{O}_{X_0}} I_x) \to T_2 \to 0,
\]
where \(T_1\) and \(T_2\) are skyscraper sheaves (torsion sheaves) supported at the node. In particular, upon taking the dual sheaf we obtain a canonical isomorphism
\[
\bigl( \End(\mathcal{F}) \otimes_{\mathcal{O}_{X_0}} I_x \bigr)^\vee 
\;\simeq\; 
\mathcal{H}om(\mathcal{F}, \mathcal{F} \otimes_{\mathcal{O}_{X_0}} I_x)^\vee,
\]
since dualizing kills skyscraper sheaves.}
\]

Using the canonical isomorphism \(I_x^\vee \cong \pi_*\mathcal{O}_{\widetilde{X_0}}\), we have
\[
K^\vee \ \cong\ \mathcal Hom(\End(\mathcal{F}), \pi_*\mathcal{O}_{\widetilde{X_0}}).
\]

Equivalently, since the endomorphisms of \(\mathcal{F}\) are precisely those endomorphisms of the corresponding vector bundle in normalization that preserve the gluing submodule \(\Gamma_A\) (i.e., the fiber identification graph $A: \tilde{E}_{x^+}\to \tilde{E}_{x^-}$), we can also write
\[
H^0(X_0, \mathcal{G}^\vee) \ = \ \ker \Bigl(\Hom_{_{\Gamma_A}}(\End(\widetilde{E}), \mathcal{O}_{\widetilde{X_0}}) \to \mathbb{C} \Bigr),
\]
where \(\Hom_{\Gamma_A}\) denotes morphisms that preserve the gluing condition defining \(\mathcal{F}\). In particular, $\phi$ induces a endomorphism of $\mathcal F$ as well. 

Concretely, this kernel consists of those global endomorphisms \(\phi\) of the vector bundle on the normalization (corresponding to \(\mathcal{F}\)) such that
\[
\operatorname{tr}(\phi_{x^+}) = -\operatorname{tr}(\phi_{x^-})
\]
and \(\phi\) preserves the gluing data of \(\mathcal{F}\).

Since \(\mathcal{F}\) is indecomposable and torsion-free, every global endomorphism \(\phi \in H^0(X_0, \End(\mathcal{F}))\) (which lifts to an endomorphism of \(\widetilde{E}\)) is of the form \(\phi = \lambda \cdot \id + N\) with \(\lambda \in \mathbb{C}\) and \(N\) nilpotent (Fitting lemma). If moreover \(\phi \in H^0(X_0, \mathcal{G}^\vee)\), the trace condition forces \(\lambda = 0\), so \(\phi = N\) is nilpotent.

It remains to show \(\langle N, \beta_{\mathcal{F}} \rangle = 0\) for any such nilpotent \(N\). This follows from the same strategy discussed in the Remark \ref{Nilp}. 

Since the pairing vanishes on all of \(H^0(X_0, \mathcal{G}^\vee)\), non-degeneracy of Serre duality implies \(\beta_{\mathcal{F}} = 0\). This completes the proof.

\end{proof}

\section{Example: Connections on some indecomposable bundles on the irreducible rational nodal curve}
\label{sec:nodal-elliptic}

Let \(X_0 \subset \mathbb{P}^2\) be the irreducible nodal cubic defined by \(y^2 z = x^3 + x^2 z\). It has a single node at \([0:0:1]\) and an arithmetic genus \(g=1\). Its normalization is \(\pi \colon \mathbb{P}^1 \to X_0\), and the dualizing sheaf satisfies \(\omega_{X_0} \cong \mathcal{O}_{X_0}\).

According to results of Bodnarchuk--Burban--Drozd--Greuel \cite{BBDG2006}, \begin{itemize}
\item Every indecomposable vector bundle \(E\) of rank \(n\) and degree \(0\) on the nodal curve \(X_0\) is isomorphic to
  \[
  E \cong (\pi_n)_* L \otimes F_n,
  \]
  where \(F_n\) is the Atiyah bundle of rank \(n\) on \(X_0\), and \(L\) is a line bundle of degree \(0\) on a suitable finite étale cover \(\pi_n : Y_n \to X_0\).

\item The pullback of \(F_n\) to the normalization \(\tilde{X}_0 \cong \mathbb{P}^1\) is trivial:
  \[
  \nu^* F_n \cong \mathcal{O}_{\mathbb{P}^1}^{\oplus n}.
  \]

\item The bundle \(F_n\) is recovered by gluing the fibers over the two preimages \(x^+, x^-\) of the node via the unipotent Jordan block
  \[
  A = J_n(1) =
  \begin{pmatrix}
  1 & 1 & 0 & \cdots & 0 \\
  0 & 1 & 1 & \cdots & 0 \\
  \vdots & \vdots & \ddots & \ddots & \vdots \\
  0 & 0 & \cdots & 1 & 1 \\
  0 & 0 & \cdots & 0 & 1
  \end{pmatrix}.
  \]
\end{itemize}

Since \(\deg F_n = 0\), Theorem~\ref{Weil} guarantees the existence of holomorphic logarithmic connections on \(F_n\). An explicit one-parameter family can be constructed as follows.

Equip the trivial bundle \(\mathcal{O}_{\mathbb{P}^1}^{\oplus n}\) with the logarithmic connection
\[
\widetilde{\nabla}_t =
\begin{cases}
d + t \frac{dz}{z} \cdot I_n & \text{near } x^+, \\
d - t \frac{dz}{z} \cdot I_n & \text{near } x^-,
\end{cases}
\]
for any \(t \in \mathbb{C}\). The residues are \(\operatorname{res}_{x^+} \widetilde{\nabla}_t = t I_n\) and \(\operatorname{res}_{x^-} \widetilde{\nabla}_t = -t I_n\).

The compatibility condition
\[
-A \circ \operatorname{res}_{x^-}(\widetilde{\nabla}_t) = \operatorname{res}_{x^+}(\widetilde{\nabla}_t) \circ A
\]
holds because residue matrices are scalar matrices. Thus \(\widetilde{\nabla}_t\) descends to a holomorphic logarithmic connection \(\nabla_t\) on \(F_n\) over \(X_0\). All such connections are flat (as \(\dim X_0 = 1\)).

When \(t=0\), \(\nabla_0\) is the pullback of the trivial connection on \(\mathbb{P}^1\). For \(t \neq 0\), \(\nabla_t\) has a non-trivial logarithmic singularity at the node with scalar residue \(t\).

The space of holomorphic logarithmic connections on \(F_n\) is an affine space over
\[
H^0(X_0, \End(F_n) \otimes \omega_{X_0}).
\]
Since \(\deg(\End(F_n) \otimes \omega_{X_0}) = 0\) and \(\operatorname{rk}(\End(F_n)) = n^2\), Riemann--Roch gives
\[
\chi(\End(F_n) \otimes \omega_{X_0}) = 0.
\]
By Serre duality and the fact that \(H^0(X_0, \End(F_n)) \cong \mathbb{C}\) (see \cite[Lemma 9.13]{BBDG2006}), we have
\[
\dim H^1(X_0, \End(F_n) \otimes \omega_{X_0}) = 1,
\]
hence \(\dim H^0(X_0, \End(F_n) \otimes \omega_{X_0}) = 1\). This confirms that the family \(\{\nabla_t \mid t \in \mathbb{C}\}\) exhausts all holomorphic logarithmic connections (up to translation by flat ones, which are unique up to scalar in this case).

\begin{remark}
On a smooth elliptic curve, the Atiyah bundle of rank \(n\) admits a one-dimensional space of flat connections, parametrized by \(H^0(X, \Omega_X) \cong \mathbb{C}\). 

In the nodal degeneration to the rational nodal curve \(X_0\), this one-dimensional parameter survives and is realized concretely as the residue \(t\) at the node (i.e., the off-diagonal entry in the Jordan block gluing).

Consequently, an indecomposable vector bundle \(E\) of rank \(n\) and degree \(0\) on the rational nodal curve \(X_0\) admits a one-dimensional space of holomorphic logarithmic connections with at most logarithmic poles at the node if and only if \(\dim H^0(X_0, \End(E)) = 1\), which holds precisely when \(E\) is isomorphic to the Atiyah bundle \(F_n\) \cite[Lemma 8 and Theorem 21]{BBDG2006}.
\end{remark}

\section*{Funding Declarations}
This research received no external funding.

\end{document}